\numberwithin{equation}{section}
\theoremstyle{plain}
\newtheorem{theorem} {Theorem} [section]
\newtheorem{lemma} [theorem] {Lemma}
\newtheorem{corollary} [theorem] {Corollary}
\newtheorem{proposition} [theorem] {Proposition}
\theoremstyle{definition}
\newtheorem{definition}[theorem]{Definition}
\newtheorem{problem}[theorem]{Problem}
\renewcommand \parallel {/\kern-3pt/}
\newcommand \N {\mathbb N}
\newcommand \Z {\mathbb Z}
\newcommand \z {\mathfrak{z}}
\newcommand \n {\mathfrak{n}}
\newcommand \gl {\mathfrak{gl}}
\newcommand \s {\mathfrak{s}}
\renewcommand \k {\mathrm{k}}
\begin{document}

%%%%%%%%%%%%%%%%%%%%%%%%%%%%%%%%%%%%%%%%%%%%%%%%%%%%%%%%%%%%%%%%%%%%%%%%%%%%%%%%%%%%%%%%%%%%%%%%%%

\title[A lower bound for faithful repns of nilpotent Lie algebras]{A lower bound for faithful representations of nilpotent Lie algebras}

%    Information for first author
\author{Leandro Cagliero}
\address{CONICET - FaMAF, Universidad Nacional de C\'ordoba, Argentina}
\email{cagliero@famaf.unc.edu.ar}

\author{Nadina Rojas}
\address{CIEM, FCEFyN Universidad Nacional de C\'ordoba,  Argentina}
\email{ nrojas@efn.uncor.edu}

\thanks{Partially supported by CONICET, FONCyT and SECyT-UNC Grants (Argentina)}

%    General info
\subjclass[2010]{17B10, 17B30, 17B35, 17B45}
%\date{\today}

\keywords{Nilpotent Lie algebras, Ado's Theorem, Nilrepresentation, Minimal Faithful Representation}

\begin{abstract}
In this paper we present a lower bound for the minimal dimension $\mu(\n)$
of a faithful representation of a finite dimensional $p$-step nilpotent Lie algebra $\n$ over a field of characteristic zero.
Our bound is given as the minimum of a quadratically constrained linear optimization problem,
it works for arbitrary $p$ and takes into account a given filtration of $\n$.
We present some estimates of this minimum which leads to a very explicit
lower bound for $\mu(\n)$ that involves the dimensions of $\n$ and its center.
This bound allows us to obtain $\mu(\n)$ for some families of nilpotent Lie algebras.
\end{abstract}

\maketitle

%====================================================================================

\section{Introduction and main results}\label{intro}

%====================================================================================

In this paper all Lie algebras and representations are finite dimensional over field
$\k$ of characteristic zero.
Given a representation $(\pi, V)$ of a nilpotent Lie algebra
$\mathfrak{n}$, we say that
$(\pi, V)$ is a \emph{nilrepresentation} if
$\pi(X)$ is a nilpotent for all $X \in \mathfrak{n}$.

Ado's Theorem states that any  Lie algebra
has a faithful representation (see \cite[p. 202]{J}).
Nevertheless, given a Lie algebra $\n$, the invariants
\begin{align*}
\mu(\mathfrak{n}) &= \min \{\dim V : (\pi, V) \text{ is a faithful representation of } \mathfrak{n}\}, \\
\mu_{nil}(\mathfrak{n})&= \min \{\dim V : (\pi, V) \text{ is a faithful nilrepresentation of } \mathfrak{n}\}.
\end{align*}
are, in general, very difficult to compute or even to estimate.
Apart from its intrinsic interest,
the map $\mu$ is not only important in computational mathematics,
but it is also connected to the theory of compact affine manifolds and crystallographic groups
(see for instance \cite{Be,Bu,K,Mi,Se})
and to the theory of polycyclic groups
(see for instance \cite{Se1}[Ch. 5,6], \cite{GSe}[\S3.2]).

 The value of $\mu(\n)$ has been obtained only for very few
families of Lie algebras $\n$ (see, for instance \cite{Be, Bu, BM, CR, R, S}).

Obtaining general results about $\mu$, in particular new bounds, is very hard.
On the one hand, there is a number of papers investigating new methods for
constructing faithful representations of small dimension for a given class of (nilpotent) Lie algebras
(see for instance \cite{BM2,BEdG,dG,dGN,Ne})
and thus obtaining upper bounds for $\mu$.
In this direction, an ambitious goal is to find out whether there is a fixed polynomial $p$
such that $\mu(\n)\le p(\dim\n)$ for all Lie algebras $\n$ (at least inside a wide class).

On the other hand, general lower bounds are crucial for proving
that a given faithful representation of a Lie algebra is actually of minimal dimension.
They are also important for their applications to other problems.
For instance, the counterexample obtained by
Benoist \cite{Be} to Milnor's conjecture \cite{Mi} is based on a family of
Lie algebras satisfying $\mu(\n)>\dim\n+1$.
On the group theory side, lower bounds for faithful representations of finite groups have been used
to obtain a lower bound for the smallest non-trivial eigenvalue of the Laplace-Beltrami operator
on certain manifolds
\cite{SX},
or to answer questions of Lubotzky about the
uniform expansion bounds for the Cayley graphs of $\text{SL}_2(\mathbb{F}_p)$
\cite{BoG}.

\smallskip

In this paper we obtain the following lower bound of $\mu_{nil}$ for nilpotent Lie algebras.

\begin{theorem}\label{thm.main}
Let
$\mathfrak{n}$ be a Lie algebra and let
$\mathfrak{n}_p \subset \dots \subset \mathfrak{n}_1=\mathfrak{n}$ be a filtration of
$\mathfrak{n}$ such that $\mathfrak{n}_{p_0}$ is contained in the center of $\n$.
Then
\[
 \mu_{nil}(\mathfrak{n})\ge r_0^{\text{min}}
\]
where $r_0^{\text{min}}$ is the minimum value of
\[
 r_0 = a_0+a_1+ \dots +a_p,\quad a_0,a_1,\dots ,a_p\in \Z,
\]
subject to the following restrictions:

\medskip
\noindent
\begin{tabular}{lll}
(a) & $a_0,a_p\ge 1$ and $a_k\ge 0$, & for $k=1,\dots,p-1$;  \\[2mm]

(b) & $\displaystyle\sum_{i=0}^{p_0-k} a_i (a_{k + i}+\dots+a_p)\ge \dim{\n}_k $, & for  $k= 1, \dots, p_0$; \\[6mm]

(c) & $a_0  (a_{k}+\dots+a_p)  \geq  \dim{\n}_k$, & for    $k= p_0, \dots, p$.
\end{tabular}
\end{theorem}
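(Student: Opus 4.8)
The plan is to start from a faithful nilrepresentation $(\pi,V)$ of $\n$ of minimal dimension, so that $\dim V=\mu_{nil}(\n)$, and to manufacture from it a feasible point $(a_0,\dots,a_p)$ of the optimization problem with $a_0+\dots+a_p=\dim V$; since $r_0^{\text{min}}$ is the minimum over all feasible points, this immediately gives $r_0^{\text{min}}\le\dim V=\mu_{nil}(\n)$. Because every $\pi(X)$ is nilpotent and $\n$ is nilpotent, Engel's theorem lets me work with a flag adapted to the filtration. Concretely, I would use the given filtration to build the decreasing filtration
\[
\mathcal F^{\ge j}V=\sum\pi(\n_{k_1})\cdots\pi(\n_{k_m})V,\qquad k_1+\cdots+k_m\ge j,
\]
set $V_p=\mathcal F^{\ge p}V$, choose splittings $\mathcal F^{\ge j}V=V_j\oplus\mathcal F^{\ge j+1}V$ for $0\le j<p$, and put $a_j=\dim V_j$. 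By construction $V=\bigoplus_{j=0}^{p}V_j$ and $\sum_j a_j=\dim V$, and one gets the two structural facts on which everything rests: the grading shift $\pi(\n_k)V_j\subseteq V_{j+k}\oplus\cdots\oplus V_p$ (with $V_p$ absorbing all degrees $\ge p$), and, since $\mathcal F^{\ge1}V=\pi(\n)V$ and the action is nilpotent, the fact that $V_0$ generates $V$ as a $\pi(\n)$-module, i.e. $V=\sum_{m\ge0}\pi(\n)^mV_0$.

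The restriction (a) is then easy: $a_0=\dim V/\pi(\n)V\ge1$ because $\pi(\n)V\subsetneq V$ by nilpotency, and $a_k\ge0$ is automatic. For $a_p\ge1$ I would argue by faithfulness: since $\n_p\ne0$ we have $\pi(\n_p)\ne0$, while the grading shift forces $\pi(\n_p)V_j\subseteq V_p$ for every $j$, so $V_p\ne0$.

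For the restrictions (c), where $p_0\le k\le p$ and $\n_k$ lies in the center, I would exploit that each $\pi(X)$ with $X\in\n_k$ commutes with all of $\pi(\n)$. As $V_0$ generates $V$, such a $\pi(X)$ is completely determined by its restriction $\pi(X)|_{V_0}$, which by the grading shift lies in $\Hom(V_0,V_k\oplus\cdots\oplus V_p)$. Faithfulness makes $X\mapsto\pi(X)|_{V_0}$ injective on $\n_k$, and comparing dimensions yields $\dim\n_k\le a_0(a_k+\cdots+a_p)$, which is exactly (c).

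The restrictions (b), for $1\le k\le p_0$, are the heart of the matter and where I expect the real difficulty. The natural candidate is the restriction-to-low-layers map $X\mapsto\bigl(\pi(X)|_{V_i}\bigr)_{0\le i\le p_0-k}$ into $\bigoplus_{i=0}^{p_0-k}\Hom(V_i,V_{i+k}\oplus\cdots\oplus V_p)$, whose target has dimension precisely the right-hand side of (b); the entire content is that this map is injective on $\n_k$. The obstacle is that $\pi(X)$ is a priori unconstrained on the high layers $V_i$ with $i+k>p_0$, and one must show that these components carry no independent information. Here I would use the commutation identity $\pi(X)\pi(Y)=\pi(Y)\pi(X)+\pi([X,Y])$ together with the compatibility $[\n_k,\n_l]\subseteq\n_{k+l}$ to rewrite the action of $\pi(X)$ on a high layer in terms of its action on lower layers and of brackets $\pi([X,Y])$ of strictly higher filtration degree, running a downward induction on $k$ whose base case $k\ge p_0$ is precisely the centrality statement already used for (c). The delicate point — and the step I expect to be the main obstacle — is the bookkeeping that makes this elimination of the high-layer components terminate exactly at the index $p_0-k$, so that the centrality of $\n_{p_0}$ is what pins down the remaining freedom; getting this accounting right is what produces the truncated range in (b) instead of the full sum $\sum_{i=0}^{p-k}$.
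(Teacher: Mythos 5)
Your strategy is genuinely different from the paper's: you extract the numbers $a_j$ from a decreasing filtration $\mathcal{F}^{\ge j}V$ of the module built from weighted products of the $\pi(\n_k)$, whereas the paper extracts them from generic orbit dimensions $r_k=\max_v \dim \n_k v$ via the decomposition algorithm of Theorem \ref{teo:descomposicion}, and then reads the inequalities off the shape of the matrices in the adapted basis \eqref{eq:9}. Within your framework, the arguments for restriction (a) and for restriction (c) (a central $X$ is determined by $\pi(X)|_{V_0}$ because $V_0$ generates $V$ under the nilpotent action) are correct and complete.

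The problem is restriction (b), which you yourself call the heart of the matter: you never prove injectivity of $X\mapsto\bigl(\pi(X)|_{V_i}\bigr)_{0\le i\le p_0-k}$ on $\n_k$, and the downward induction you sketch does not close as stated. To run it you would need the class of $X\in\n_k$ with $\pi(X)|_{V_i}=0$ for all $i\le p_0-k$ to be stable under bracketing with $Y\in\n_j$, landing in the corresponding class for $k+j$. But in $\pi([X,Y])|_{V_i}=\pi(X)\pi(Y)|_{V_i}-\pi(Y)\pi(X)|_{V_i}$ only the second term visibly vanishes: $\pi(Y)V_i$ lands in $V_{i+j}\oplus\dots\oplus V_p$, which includes the high layers $V_{p_0-k+1},\dots,V_p$ on which nothing is assumed about $\pi(X)$, so the hypothesis does not propagate and the induction has no way to start eliminating the high-layer components. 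The claim does appear to be true and provable --- for instance when $p=p_0=2$, $k=1$ one can take $c$ maximal with $\pi(X)\mathcal{F}^{\ge 2}V\subseteq\mathcal{F}^{\ge c}V$, use that each $[X,Y]$ is central with $\pi([X,Y])V_0\subseteq\pi(X)V_2\subseteq\mathcal{F}^{\ge c}V$, and push everything into $\mathcal{F}^{\ge c+1}V$ to force $\pi(X)\mathcal{F}^{\ge 2}V=0$ --- but this is precisely the nontrivial content of (b), it is not routine bookkeeping, and for general $p$, $p_0$, $k$ it is not written down. As it stands the proposal establishes (a) and (c) but not (b), so the proof is incomplete at its decisive step.
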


The quadratically constrained linear optimization problem involved in the above theorem seems to be difficult.
In this paper we present some quick, but not trivial,  estimations of $r_0^{\text{min}}$
and the lower bounds obtained are already interesting.
We are confident that future research on $r_0^{\text{min}}$
will provide very good lower bounds for $\mu_{nil}(\mathfrak{n})$.
As a consequence of our estimates, we obtain the following theorem.

\begin{theorem}\label{Thm.mainbound}
 Let $\mathfrak{n}$ be a
 $p$-step nilpotent Lie algebra, $p>1$, and let
 $\z$ be the center of
 $\mathfrak{n}$.

 \smallskip
\begin{enumerate}[(1)]
 \item If $\dim\n \ge \big((p-1)^2+p^2\big)\dim\z$ then
\[
   \mu_{nil}(\mathfrak{n})\ge  \sqrt{\frac{2p}{p-1}(\dim\n-\dim\z)}.
\]
 \item If $\dim\n \le \big((p-1)^2+p^2\big)\dim\z$ then
\[
   \mu_{nil}(\mathfrak{n})\ge
   \sqrt{\frac{2(p-1)}{p-2}\dim\n +\frac{2p(p-1)}{(p-2)^2}\dim\z}\,-\,\frac{2}{p-2}\sqrt{\dim\z},
\]
if $p\ne 2$, and   $ \mu_{nil}(\mathfrak{n})\ge \frac{\dim\n+3\dim\z}{2\sqrt{\dim\z}}$ if
$p=2$.
\end{enumerate}
In both cases, the given bound is bigger than $\sqrt{ \frac{2(p+1)}{p} \dim \mathfrak{n}}$.
\end{theorem}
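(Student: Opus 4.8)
The plan is to apply Theorem~\ref{thm.main} to the lower central series and then estimate $r_0^{\text{min}}$ from below by a quantity depending only on $\dim\n$ and $\dim\z$. Since $\n$ is $p$-step nilpotent I would take $\mathfrak{n}_k=C^k(\n)+\z$ (with $C^k(\n)$ the lower central series), a filtration with $\mathfrak{n}_1=\n$, $\mathfrak{n}_p=\z$, $[\mathfrak{n}_i,\mathfrak{n}_j]\subseteq\mathfrak{n}_{i+j}$, and $p_0=p$. In the flag picture behind the $a_i$, the block sizes $a_0,\dots,a_p$ satisfy two structural inequalities: the center is forced into the top corner, giving $a_0a_p\ge\dim\z$, while the remaining $\dim\n-\dim\z$ dimensions of $\n$ must be accommodated in the strictly upper blocks \emph{other} than that corner, giving $\sum_{0\le i<j\le p}a_ia_j-a_0a_p \ge \dim\n-\dim\z$. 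By Theorem~\ref{thm.main}, $\mu_{nil}(\n)\ge r_0^{\text{min}}=\min\sum_i a_i$, and I would bound $r_0^{\text{min}}$ below by the minimum of $\sum_i a_i$ subject to just these two inequalities.

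Next I would collapse this to two effective variables. Symmetrizing is harmless for a lower bound: averaging the interior sizes $a_1,\dots,a_{p-1}$ keeps $\sum a_i$ fixed and can only increase $\sum_{i<j}a_ia_j$ (it lowers $\sum a_i^2$), hence preserves both inequalities; and since the off-corner inequality depends only on $u:=a_0+a_p$, while $a_0a_p\ge\dim\z$ forces $u\ge 2\sqrt{\dim\z}$, the split of $u$ is irrelevant. Setting $a_1=\dots=a_{p-1}=t$, the off-corner inequality becomes $(p-1)\,u\,t+\tfrac{(p-1)(p-2)}{2}\,t^2\ge \dim\n-\dim\z$, the objective becomes $s=u+(p-1)t$, and one minimizes $s$ over $u\ge 2\sqrt{\dim\z}$, $t\ge 0$.

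I would then solve this by Lagrange multipliers. The interior stationarity condition gives $u=t$; feeding it back yields $s=\sqrt{\tfrac{2p}{p-1}(\dim\n-\dim\z)}$, and the feasibility requirement $u=t\ge 2\sqrt{\dim\z}$ is exactly $\dim\n\ge\big((p-1)^2+p^2\big)\dim\z$, which is case~(1). When this fails the center constraint is active, $u=2\sqrt{\dim\z}$ (so $a_0=a_p=\sqrt{\dim\z}$), and minimizing the remaining one-variable quadratic in $t$ produces the case~(2) formula; the degenerate case $p=2$, where the $t^2$-term disappears, is solved directly and gives $\tfrac{\dim\n+3\dim\z}{2\sqrt{\dim\z}}$. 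A short check shows the two branches agree at the threshold (both equal $2p\sqrt{\dim\z}$). For the last assertion, note that $\sqrt{\tfrac{2(p+1)}{p}\dim\n}$ is precisely the power-mean value attained when all $a_i$ are equal (from $\sum a_i^2\ge \tfrac{(\sum a_i)^2}{p+1}$); in case~(1) the difference of squares is $\tfrac{2}{p(p-1)}\big(\dim\n-p^2\dim\z\big)\ge 0$ on the whole range, and in case~(2) a direct comparison, monotone in $\dim\n$ and anchored at the threshold value $2p\sqrt{\dim\z}$, gives the same conclusion.

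The main obstacle is not the optimization but justifying the off-corner inequality $\sum_{i<j}a_ia_j-a_0a_p\ge\dim\n-\dim\z$, i.e. that the center really occupies the corner and the rest of $\n$ cannot. This is exactly what makes $\dim\z$ enter as a \emph{binding} constraint, and hence what lets the bound exceed $\sqrt{\tfrac{2(p+1)}{p}\dim\n}$: from the total inequality $\sum_{i<j}a_ia_j\ge\dim\n$ together with $a_0a_p\ge\dim\z$ alone, the all-equal tuple is already feasible and realizes $\sqrt{\tfrac{2(p+1)}{p}\dim\n}$, so those two inequalities by themselves cannot yield the stronger bounds. Pinning down the corner/center separation within the flag model of Theorem~\ref{thm.main} — and verifying the case~(2) comparison uniformly in $p$ — is where the real work lies.
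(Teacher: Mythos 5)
Your reduction to the two-variable problem and the optimization itself are carried out correctly and reproduce exactly the formulas of the theorem (including the threshold $\dim\n=\big((p-1)^2+p^2\big)\dim\z$ and the agreement of the two branches at $2p\sqrt{\dim\z}$). The genuine gap is the one you name yourself: the ``off-corner'' inequality
\[
\sum_{0\le i<j\le p}a_ia_j-a_0a_p\ \ge\ \dim\n-\dim\z
\]
is not among the constraints delivered by Theorem \ref{thm.main}. For the filtration $\n_k=C^k(\n)+\z$ with $p_0=p$, Theorem \ref{thm.main} gives only $\sum_{i<j}a_ia_j\ge\dim\n$ (the case $k=1$) and $a_0a_p\ge\dim\z$ (the case $k=p$), plus intermediate constraints involving $\dim\n_k$ that cannot be expressed through $\dim\n$ and $\dim\z$ alone. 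As you correctly observe, those two constraints are satisfied by the all-equal tuple, which only yields $\sqrt{2(p+1)\dim\n/p}$; so without the off-corner inequality the argument collapses, and nothing in your proposal (nor in Theorem \ref{teo:descomposicion} or Proposition \ref{prop.B} as stated) establishes it. What would be needed is a refinement of Theorem \ref{lemma:maximo2} showing that the image of the center under the map $T$ occupies a $\dim\z$-dimensional piece of the $a_0r_p$ corner block while the rest of $T(\n)$ meets that block in dimension at most $\dim\z$; that is a structural statement about the construction of \S\ref{preli}--\S\ref{lower}, not about the optimization.

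For comparison, the paper does not derive the off-corner inequality either: its route (\S\ref{sec.estimates}, second simplification) keeps the same two constraints but restricts the minimization to the locus where both hold with \emph{equality} --- on which the off-corner quantity automatically equals $\dim\n-\dim\z$ --- and computes critical points there. Your all-equal-tuple observation shows precisely that the minimum of the inequality-constrained problem need not lie on that locus (in case (1) it does not, since the corner constraint is slack at the all-equal point), so the paper's restriction requires the very justification your off-corner inequality would supply; you therefore cannot close your gap simply by citing the paper's computation. In short: your optimization is sound and your diagnosis of where the real difficulty lies is accurate, but as written the proposal reduces the theorem to an unproven structural lemma rather than proving it.
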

From this theorem, $\mu_{nil}$ is obtained for the following families.
\begin{enumerate}[(i)]
 \item Given
$p, a \in \mathbb{N}$, let
\begin{equation*}\label{eq:2}
{\mathfrak{n}}_{a,p} = \left\{
          \left(
          \begin{smallmatrix}
          0 & A_{12} & A_{13} & \dots & A_{1p+1} \\
            & 0 & A_{23} & \dots & A_{2p+1} \\
            &   &   \ddots     &       & \vdots\\
            &  0  &        &       & A_{pp+1} \\
            &    &        &       & 0
          \end{smallmatrix}
          \right) : A_{ij} \in M_a(\k) \text{ para } 1 \leq i < j \leq p+1
          \right\},
\end{equation*}
then 
$\mu(\mathfrak{n}_{a,p})= (p + 1)a$.

\medskip

\item Given
$a, b, c \in \mathbb{N}$  let
$$
\mathfrak{n}_{a,b,c}= \left\{
            \left(\begin{smallmatrix}
          0 & A_{ab} & A_{ac}\\
            & 0 & A_{bc}\\
          &   & 0
          \end{smallmatrix}
          \right) : A_{ab} \in M_{a,b}(\k),A_{ac} \in M_{a,c}(\k), A_{bc}(\k) \in M_{b,c}\right\}.
$$
Then, if either
$b=a+c$, or
$a=c$ and
$b\le 2a$, 
$$
\mu(\mathfrak{n}_{a,b,c}) = a+b+c.
$$

\end{enumerate}
The above two families are nilradicals of parabolic subalgebras of simple Lie algebras of type A.
The above result shows that their defining representation is faithful of minimal dimension.
However this is not true for all nilradicals of type A.
For instance
if 
$a=b=1$, then the Lie algebra 
$\n_{1,1,c}$ given in (ii) satisfies
$\mu(\mathfrak{n}_{1,1,c})= \left \lceil 2\sqrt{2c} \right\rceil < 2 + c$ for all $c\in\N$,
as shown in \cite{AR}.

\medskip

The paper is organized as follows. In \S\ref{preli} we prove Theorem \ref{teo:descomposicion} which is a key result.
It allows us to obtain certain special bases for faithful
representations of nilpotent Lie algebras
that eventually lead, in \S\ref{lower}, to the optimization problem of Theorem \ref{thm.main}.
In this section, an open question is posed.
In \S\ref{sec.aplications}
we compute $\mu_{nil}$ for the families (i) and (ii). 
In \S\ref{sec.estimates} we obtain estimates for the minimum of
our optimization problem and prove Theorem \ref{Thm.mainbound}.

%====================================================================================

\section{Linearly independent subsets associated to chains of endomorphisms}\label{preli}

%====================================================================================

In this section we describe an algorithm that, given a faithful $\n$-module $V$,
will provide a basis of $V$ with certain special properties that
will allow us to estimate $\dim V$.

First, we recall the following standard lemma.

\begin{lemma}\label{lemma:maximo}
Let
$V$ be a vector space and let
$\mathcal{T}_1, \dots, \mathcal{T}_p$ be vector subspaces of
$End(V)$.
If
$r_i= \max \{\dim \mathcal{T}_i v : v \in V\}$ and
$W_i= \{w \in V : \dim \mathcal{T}_i w= r_i\}$,
then $\cap_{i=1}^p W_i$
is a non-empty open dense subset of $V$.
In particular, there exists
$v \in V$ such that
$
\dim \mathcal{T}_i v = r_i$
for all
$i=1, \dots, p$.
\end{lemma}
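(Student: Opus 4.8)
The plan is to work throughout in the Zariski topology on $V$, which after choosing a basis we identify with $\k^n$ where $n=\dim V$; since $\k$ has characteristic zero it is infinite, so this topology is well behaved and density is the notion we want for producing generic vectors later. The strategy is to prove that each $W_i$ is a non-empty Zariski-open subset of $V$ and then to deduce the conclusion from the irreducibility of affine space. The real point is not any single $W_i$ but the passage from ``each $W_i$ is dense'' to ``$\cap_{i=1}^p W_i$ is dense'', a step that is false for general topological spaces and that here rests precisely on irreducibility.

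First I would show that $W_i$ is open. Fix a basis $T_1,\dots,T_m$ of $\mathcal{T}_i$, so that $\mathcal{T}_i v$ is spanned by $T_1 v,\dots,T_m v$ and hence $\dim\mathcal{T}_i v=\rank M(v)$, where $M(v)$ is the $n\times m$ matrix whose columns are the coordinate vectors of $T_1 v,\dots,T_m v$. Each entry of $M(v)$ is a linear form in the coordinates of $v$, so every $k\times k$ minor of $M(v)$ is a polynomial in those coordinates. Consequently the locus
\[
\{\, v\in V : \dim\mathcal{T}_i v\ge k \,\}=\{\, v\in V : \text{some } k\times k \text{ minor of } M(v) \text{ is nonzero} \,\}
\]
is the complement of the common zero set of these minors, hence Zariski-open. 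Since $\dim\mathcal{T}_i v\le\dim V$ for every $v$, the integer-valued quantity $\dim\mathcal{T}_i v$ attains its maximum $r_i$, so $W_i=\{\, v : \dim\mathcal{T}_i v\ge r_i \,\}$ is a non-empty open set.

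Finally I would invoke irreducibility. Because $\k[x_1,\dots,x_n]$ is an integral domain, $V\cong\k^n$ is irreducible, so every non-empty Zariski-open subset is dense and any two such subsets meet; iterating this over $i=1,\dots,p$ shows that $\cap_{i=1}^p W_i$ is again non-empty and open, and therefore dense. Any $v$ in this intersection then satisfies $\dim\mathcal{T}_i v=r_i$ for all $i$ simultaneously, which is the last assertion. The only genuine subtlety, as flagged above, is this irreducibility step: verifying that each individual $W_i$ is open is a routine rank computation, but the fact that finitely many dense opens still intersect densely is exactly what the irreducibility of $\k^n$ over the infinite field $\k$ provides.
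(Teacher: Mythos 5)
Your proof is correct and follows essentially the same route as the paper's: both express the condition $\dim\mathcal{T}_i v\ge r_i$ through the non-vanishing of minors of a matrix whose entries are linear in $v$, obtaining a non-empty Zariski-open subset of $V$, and then intersect finitely many such sets using that $\k$ is infinite. One small remark: your framing slightly overstates the role of irreducibility, since a finite intersection of \emph{open} dense sets is dense in any topological space (which is exactly the reduction the paper uses); irreducibility is only needed to pass from ``non-empty open'' to ``dense'', a step the paper instead handles by exhibiting, around each point of $W_i$, a dense principal open set contained in $W_i$.
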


\begin{proof}
Since the intersection of open dense subsets is
a non-empty open dense subset, it suffices to prove that $W_i$
is open and dense for all $i$.
Let us fix $i=1,\dots,p$, and let
$w \in W_i$ and let
$\{T_1, \dots, T_{r_i} \} \subseteq \mathcal{T}_i$ be such that
$$
\{T_1(w), \dots, T_{r_i}(w)\}
$$
is a basis of
$\mathcal{T}_iw$. For any
$v\in V$,
$A(v)$ denote the matrix whose columns are the coordinates of
$T_1(v), \dots, T_{r_i}(v)$ in a given basis
$B$ of $V$. Since
$\{T_1(w), \dots, T_r(w)\}$ is a linearly independent set, the matrix
$A(v)$ has an
$(r \times r)$-minor $a(v)$ such that $\det a(w)\ne0$.
Therefore, the open set
$U=\{v\in V:\det a(v)\ne0\}$ contains $w$,
is contained in $W_i$ and, since $\k$ is an infinite field, it is dense.
\end{proof}

\begin{definition}
Given a vector space $V$ and a sequence of vector subspaces
$\mathcal{T}_1, \dots, \mathcal{T}_p$ of
$\text{End}(V)$,
we say that $v\in V$ is \emph{rank-vector} for the sequence $\mathcal{T}_1, \dots, \mathcal{T}_p$,
if
\[
\dim \mathcal{T}_i v = \max \{\dim \mathcal{T}_i w : w \in V\}
\]
for all
$i=1, \dots, p$.
\end{definition}

Let $\mathcal{T}_p \subset \dots \subset  \mathcal{T}_1= \mathcal{T}$ be a chain of
vector subspaces of $\text{End}(V)$ and let $\{v_1,v_2,v_3, \dots\}$ the sequence (which eventually will be finite) 
obtained by applying the following procedure:
\begin{enumerate}[(1)]
 \item choose a rank-vector
       $v_1$ for the chain $\mathcal{T}$,
 \item choose a (special) linear complement  $\mathcal{T}'$ of the annihilator of $v_1$ in $\mathcal{T}$,
 \item choose a rank-vector $v_2$ for the chain $\mathcal{T}'$,
\end{enumerate}
  and so on.
More precisely, the procedure is given by the following algorithm.

\medskip

\begin{enumerate}[(i)]
 \item \texttt{For all $k=1,\dots,p$, let $s_k:=0$ and $\mathcal{R}_{k} := \mathcal{T}_k$.
  \\  Let $i:=0$, $q:=p$.}
\medskip
 \item  \texttt{Increase $i$ by 1.}
\medskip
 \item  \texttt{Let $v_i$ be a rank-vector associated to $\mathcal{R}_q \subset \dots \subset  \mathcal{R}_1$.}
\medskip
 \item  \texttt{For all $k=1,\dots,q$, let
\[
 \tilde{\mathcal{R}}_{k} = \text{Ann}_{\mathcal{R}_k}(v_i)=\{T\in\mathcal{R}_k: T(v_i)=0\},
\]
If $\tilde{\mathcal{R}}_{k}\ne\mathcal{R}_k$, increase $s_k$ by 1 and
 let $\mathcal{T}_{k,i}$ be such that
\[
 \mathcal{R}_{k}=\mathcal{T}_{k,i}\oplus \tilde{\mathcal{R}}_{k} \text{ and }
 \mathcal{T}_{k,i} \supseteq \mathcal{T}_{k+1,i}\quad \text{(assume $\mathcal{T}_{q+1,i}=0$)}.
\]}
 \item  \texttt{If $\tilde{\mathcal{R}}_{1}\ne 0$,  let $q$ be the largest $j$ such that $\tilde{\mathcal{R}}_{j}\ne0$ and let
\[
\mathcal{R}_{k} := \tilde{\mathcal{R}}_{ k},\quad\text{ $k=1,\dots,q$,}
\]
(we have  $\mathcal{R}_{q} \subset \dots \subset  \mathcal{R}_1$). Go to (ii).}
\medskip
\item   \texttt{End.}
\end{enumerate}
\bigskip
As a result we obtain:
\begin{enumerate}[(a)]
 \item A partition $s_1 \geq s_2\geq \dots \geq s_p>0$, ($s_1$ is the final value of $i$).

\medskip
\item A set $\{v_1, v_2,\dots, v_{s_1}\}$.

\medskip
 \item A family of subspaces $\mathcal{T}_{k,j}\subset\text{End}(V)$, $1\le j\le s_k$ and $1\le k\le p$.
\end{enumerate}

\medskip

The following theorem summarizes some of the main properties
of the set $\{v_1, v_2,\dots, v_{s_1}\}$
and the family of subspaces $\mathcal{T}_{k,j}\subset\text{End}(V)$.

\begin{theorem}\label{teo:descomposicion}
Let $V$ be a vector space and let
$\mathcal{T}_p \subset \dots \subset  \mathcal{T}_1$ be
a chain of subspaces in $\text{End}(V)$.
Then
there exist
a partition $s_1 \geq s_2\geq \dots \geq s_p>0$,
a linearly independent set
$\{v_1, \dots, v_{s_1}\} \subset V$
and a family of subspaces $\mathcal{T}_{k,j}\subset\text{End}(V)$, $1\le j\le s_k$ and $1\le k\le p$,
such that:

\medskip

\noindent
$$
\begin{array}{cccccccccccc}
\!\!\!\text{(1)} \!&\!
{\mathcal{T}}_1\!&\!=\!&\!{\mathcal{T}}_{1,1} \!&\! \oplus  \dots  \oplus \!&\! {\mathcal{T}}_{1, s_{ p}}\!&\! \oplus  \dots \oplus \!&\! {\mathcal{T}}_{1, s_{ p-1}} \!&\! \oplus  \dots  \oplus \!&\! {\mathcal{T}}_{1, s_2} \!&\! \oplus  \dots  \oplus \!&\! {\mathcal{T}}_{1, s_1}\\
 \!&\!   \cup       \!&\! \!&\! \cup                               \!&\!          \!&\!  \cup          \!&\!                              \!&\!    \cup                 \!&\!                             \!&\!      \cup       \\
\!&\!{\mathcal{T}}_2\!&\!=\!&\!{\mathcal{T}}_{2,1} \!&\! \oplus  \dots  \oplus \!&\! {\mathcal{T}}_{2, s_{ p}}\!&\! \oplus   \dots  \oplus \!&\! {\mathcal{T}}_{2, s_{ p-1}} \!&\! \oplus  \dots  \oplus \!&\! {\mathcal{T}}_{2, s_2}\\
 \!&\!   \cup       \!&\! \!&\! \cup                               \!&\!          \!&\!  \cup          \!&\!                              \!&\!    \cup                 \!&\!\\
\!&\!\vdots         \!&\!   \!&\!  \vdots           \!&\!                         \!&\!   \vdots      \!&\!                                \!&\!       \vdots    \\
 \!&\!   \cup       \!&\! \!&\! \cup                               \!&\!          \!&\!  \cup          \!&\!                              \!&\!    \cup                 \!&\!\\
\!&\!{\mathcal{T}}_{p-1}\!&\!=\!&\!{\mathcal{T}}_{{p-1},1} \!&\! \oplus  \dots  \oplus \!&\! {\mathcal{T}}_{{p-1}, s_{ p}}\!&\! \oplus   \dots  \oplus \!&\! {\mathcal{T}}_{{p-1}, s_{ p-1}}\\
\!&\!    \cup       \!&\! \!&\! \cup             \!&\!                          \!&\!  \cup          \!&\!                             \!&\!     \\
\!&\!{\mathcal{T}}_{ p}\!&\!=\!&\!{\mathcal{T}}_{ p,1} \!&\! \oplus  \dots  \oplus \!&\! {\mathcal{T}}_{ p, s_{ p}}
\end{array}
$$

\noindent
We notice that this display resembles the Young diagram of the partition
$s_1 \geq s_2\geq \dots \geq s_p$.

\smallskip
\begin{enumerate}[(2)]
\item[(2)] $\dim \mathcal{T}_{k,j}= \dim \mathcal{T}_{k,j} v_j$ for
      $j=1, \dots, s_k$ and $k= 1, \dots, p$.
\smallskip
\item[(3)] $\mathcal{T}_{k,j}v_i= 0$ for
      $1 \leq i < j \leq s_k$ and $k=1, \dots, p$.
\smallskip
\item[(4)] $\mathcal{T}_{k,j}V \subseteq \mathcal{T}_{k,i}v_i$ for $1 \leq i < j \leq s_k$ and $k=1, \dots, p$.
\end{enumerate}
Moreover, if
$\mathcal{T}_1$ consists of nilpotent operators and
$[\mathcal{T}_1, \mathcal{T}_{p_0}]= 0$,
$1\leq p_0 \leq p$, then
$\mathcal{T}_{1,1} v_1 \cap \operatorname{span}_{\k}\{v_1, \dots, v_{s_{p_0}}\}= 0$.
\end{theorem}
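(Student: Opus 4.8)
The plan is to run through the algorithm and check that its output satisfies (1)--(4) together with the final intersection property; the rank-vectors it calls for exist by Lemma~\ref{lemma:maximo}. Throughout, write $\mathcal{R}_k^{(i)}$ for the value of $\mathcal{R}_k$ at the start of the $i$-th pass and $q_i$ for the corresponding value of $q$, so that $\mathcal{R}_k^{(1)}=\mathcal{T}_k$, $q_1=p$, and step~(v) gives $\mathcal{R}_k^{(i+1)}=\operatorname{Ann}_{\mathcal{R}_k^{(i)}}(v_i)$; iterating yields the clean description $\mathcal{R}_k^{(i)}=\{T\in\mathcal{T}_k:Tv_1=\dots=Tv_{i-1}=0\}$. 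First I would settle termination and the combinatorial shape. Since $\mathcal{R}_1^{(i)}\ne0$ forces its maximal rank to be positive and $v_i$ is a rank-vector, row $1$ always receives a box and $\dim\mathcal{R}_1^{(i+1)}<\dim\mathcal{R}_1^{(i)}$, so the process stops after $s_1$ passes. The same rank-vector property shows that a row $k\le q_i$ receives a box at pass $i$ exactly when $\mathcal{R}_k^{(i)}\ne0$; as the chain is nested these are precisely the rows $1,\dots,q_i$, so the boxes of pass $i$ form an initial segment. Because $q_{i+1}\le q_i$ (the largest nonzero index cannot grow), the column heights are non-increasing, which is exactly the assertion $s_1\ge\dots\ge s_p>0$ (assuming $\mathcal{T}_p\ne0$). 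Property~(1) then follows by telescoping the splittings $\mathcal{R}_k^{(i)}=\mathcal{T}_{k,i}\oplus\mathcal{R}_k^{(i+1)}$, and the inclusions $\mathcal{T}_{k+1,i}\subseteq\mathcal{T}_{k,i}$ are obtained by building the complements from the bottom row upward, which is possible since $\mathcal{T}_{k+1,i}\cap\operatorname{Ann}_{\mathcal{R}_k^{(i)}}(v_i)=0$ and hence $\mathcal{T}_{k+1,i}$ enlarges to a complement of $\mathcal{R}_k^{(i+1)}$ in $\mathcal{R}_k^{(i)}$.

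Properties~(2), (3) and the linear independence of $\{v_1,\dots,v_{s_1}\}$ are then short. Property~(2) holds because $\mathcal{T}_{k,i}$ is a complement of $\operatorname{Ann}_{\mathcal{R}_k^{(i)}}(v_i)$, so $T\mapsto Tv_i$ is injective on it. Property~(3) is read off the description of $\mathcal{R}_k^{(i)}$: for $i<j$ one has $\mathcal{T}_{k,j}\subseteq\mathcal{R}_k^{(j)}$, whose elements kill $v_1,\dots,v_{j-1}$, in particular $v_i$. For independence I would take a relation $\sum_i c_iv_i=0$, let $j$ be the largest index with $c_j\ne0$, and apply any $T\in\mathcal{T}_{1,j}$: property~(3) annihilates the terms with $i<j$, leaving $c_j\,Tv_j=0$; thus $\mathcal{T}_{1,j}v_j=0$, contradicting $\dim\mathcal{T}_{1,j}v_j=\dim\mathcal{T}_{1,j}>0$.

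The \emph{main obstacle is property~(4)}. The geometric input is that $v_i$ realizes the maximal rank $r:=\dim\mathcal{R}_k^{(i)}v_i$ of $\mathcal{R}_k^{(i)}$, and that for $j>i$ we have $\mathcal{T}_{k,j}\subseteq\mathcal{R}_k^{(i+1)}=\operatorname{Ann}_{\mathcal{R}_k^{(i)}}(v_i)$. It therefore suffices to prove, for $M=\mathcal{R}_k^{(i)}$ and $v=v_i$: if $v$ attains the maximal rank $r$ of $M$, then $Sw\in Mv$ for every $w\in V$ and every $S\in M$ with $Sv=0$. I would argue by genericity: fix $T_1,\dots,T_r\in M$ with $\{T_lv\}$ a basis of $Mv$ and set $v_t=v+tw$. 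If $Sw\notin Mv$ for some such $S$, then the $r+1$ vectors $T_1v_t,\dots,T_rv_t,Sw$ are independent at $t=0$ (as $Sw\notin\operatorname{span}\{T_lv\}$), hence for all but finitely many $t$ by a nonvanishing-minor argument; but they all lie in $Mv_t$, since $Sw=t^{-1}Sv_t\in Mv_t$ for $t\ne0$, forcing $\dim Mv_t\ge r+1$ and contradicting maximality. Hence $Sw\in Mv=\mathcal{T}_{k,i}v_i$ (note $\mathcal{R}_k^{(i)}v_i=\mathcal{T}_{k,i}v_i$), which is precisely $\mathcal{T}_{k,j}V\subseteq\mathcal{T}_{k,i}v_i$.

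Finally, for the closing statement note $\mathcal{T}_{1,1}v_1=\mathcal{T}_1v_1$ (the annihilator summand kills $v_1$), so take $x=Tv_1$ with $T\in\mathcal{T}_1$ and suppose $x=\sum_{i=1}^{s_{p_0}}c_iv_i$; let $j$ be the largest index with $c_j\ne0$. If $j>1$, apply any $S\in\mathcal{T}_{p_0,j}$, which is nonzero since $j\le s_{p_0}$: property~(3) gives $Sx=c_j\,Sv_j$, while $Sv_1=0$ (property~(3), as $1<j$) together with $[\mathcal{T}_1,\mathcal{T}_{p_0}]=0$ gives $Sx=STv_1=TSv_1=0$; hence $c_j\,\mathcal{T}_{p_0,j}v_j=0$ and $c_j=0$ because $\dim\mathcal{T}_{p_0,j}v_j=\dim\mathcal{T}_{p_0,j}>0$, a contradiction. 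So $x=c_1v_1$, and if $c_1\ne0$ then $v_1=T'v_1$ with $T'=c_1^{-1}T\in\mathcal{T}_1$; iterating and using that $T'$ is nilpotent gives $v_1=T'^{\,N}v_1=0$, contradicting $\mathcal{T}_1v_1\ne0$. Therefore $c_1=0$, $x=0$, and $\mathcal{T}_{1,1}v_1\cap\operatorname{span}_{\k}\{v_1,\dots,v_{s_{p_0}}\}=0$. The genericity step underlying property~(4) is the only substantial point; everything else is bookkeeping on the algorithm.
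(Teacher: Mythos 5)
Your proof is correct and follows essentially the same approach as the paper: the crucial step for property (4) — deforming along $v_i+tw$, noting $Sw=t^{-1}Sv_t\in Mv_t$ for $t\ne0$, and using a nonvanishing-minor/genericity argument over the infinite field — is exactly the paper's argument (stated there in the contrapositive, dependence for $t\ne0$ passing to $t=0$), and your treatment of the ``moreover'' part via $[\mathcal{T}_1,\mathcal{T}_{p_0}]=0$, property (3), and nilpotency ruling out the index $j=1$ matches the paper's proof. The only differences are organizational — you make explicit the bookkeeping the paper leaves as ``by construction'' (termination, the partition shape, bottom-up extension of the complements $\mathcal{T}_{k+1,i}\subseteq\mathcal{T}_{k,i}$), and you package (4) as a standalone lemma applied at every pass and prove linear independence directly by the max-index trick, where the paper uses inductions on the tail of the algorithm.
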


\begin{proof}
By construction, it is clear that properties (1), (2) and (3) hold.

 We first prove that $\{v_1, v_2,\dots, v_{s_1}\}$
 is linearly independent.
By construction, we may assume, as an induction hypothesis, that $\{v_2,\dots, v_{s_1}\}$
is linearly independent. Thus we must show that $v_1 \notin  \operatorname{span}_{\k}\{v_2,\dots, v_{s_1}\}$.

If
\begin{equation}\label{eq.v1}
v_1= \sum_{j=2}^{ s_1} a_{j}  v_j
\end{equation}
let $j_0= \max \{j : a_j \neq 0\}\ge2$ and let
$T \in {\mathcal{T}}_{1,j_0} \subset {\mathcal{T}}_1$, $T\ne0$.
We now apply $T$ to both sides of \eqref{eq.v1}.
Property (3) implies that the left-hand side is zero and the right hand side is
$a_{j_0} T( v_{j_0})$.
On the other hand, property (2) says that
$T( v_{j_0})\ne 0$, which is a contradiction.

We now prove (4).
If
$s_1=1$ then
$\mathcal{T}_{k,1}= \mathcal{T}_k$ for all
$k= 1, \dots,p$ and  condition (4) is empty.
As we did earlier, we may assume by induction
that $\mathcal{T}_{k,j}V \subseteq \mathcal{T}_{k,i}v_i$ for $2 \leq i < j \leq s_k$ and $k=1, \dots, p$.
Thus, we only need to prove (4) when $i= 1$.
This is equivalent to prove that
$T(v) \in \mathcal{T}_{k,1}v_1$ for all $v\in V$ and  all
$T \in \mathcal{T}_{k,2} \oplus \dots \oplus \mathcal{T}_{k,s_k}$, $k= 1, \dots, p$.
If $r_k=\dim \mathcal{T}_{k,1}v_1$ and
$\{T_1, \dots, T_{r_k}\}$ is a basis of
$\mathcal{T}_{k,1}$, we must show that
\[
\{T(v),T_1(v_1), \dots, T_{r_k}(v_1)\}
\]
is linearly dependent for all
$T \in \mathcal{T}_{k,2} \oplus \dots \oplus \mathcal{T}_{k,s_k}$,
$k= 1, \dots, p$,  and all
$v\in V$.

Let us fix such $T$, $v$ and $k$. By the definition of
$v_1$, the set
\[
\{T(v_1 + tv), T_1(v_1 + tv), \dots,  T_{r_k}(v_1 + tv)\}
\]
 is linearly dependent for all
$t \in \k$. Since
$T(v_1)= 0$, we obtain that
$\{T(v), T_1(v_1 + tv), \dots, T_{r_k}(v_1 + tv)\}$ is linearly dependent for all $t \neq 0$.
 Since  $\k$ infinite, we conclude that this last set is linearly dependent for
$t= 0$.
This completes the proof of (4).

We now prove the `moreover' part of the theorem.
We must show that
\[
 \mathcal{T}_{1,1} v_1 \cap \operatorname{span}_{\k}\{v_1, \dots, v_{s_{p_0}}\}= 0.
\]
Suppose, on the contrary, that
 there exist $T\in\mathcal{T}_{1,1}$, $T\ne0$, and
$a_1, \dots, a_{s_{p_0}} \in \k$ such that
\begin{equation}\label{eq:5}
T(v_1) = \sum_{j=0}^{s_{p_0}} a_j v_j.
\end{equation}
Since $T\in\mathcal{T}_{1,1}$ and $T\ne0$, it follows that $T(v_1)\ne0$
and thus $a_j \neq 0$ for some $j$.
Let
$j_0= \max\{j : a_j \neq 0\}$.
Since
$T$ is nilpotent, its only eigenvalue is zero and thus
$1<j_0\le s_{p_0}$.

Let
$T' \in \mathcal{T}_{p_0,j_0}$, $T'\ne0$, and let us apply
$T'$ to both sides of (\ref{eq:5}).
Since
$T' \in \mathcal{T}_{p_0,j_0}$ and $j_0>1$
we obtain on the left hand side
$T'T(v_1)= TT'(v_1)= 0$.
On the other hand, it follows from properties (2) and (3) that the right hand side is
$a_{j_0}T'(v_{j_0}) \neq 0$, which is a contradiction.
\end{proof}

%====================================================================================

\section{An optimization problem leading to a lower bound for \texorpdfstring{$\mu_{nil}$}{mu nil}}\label{lower}

%====================================================================================

Let $V$ be a vector space and let
$\mathfrak{n}$ be a Lie subalgebra of
$\gl(V)$ consisting of nilpotent endomorphisms.
Let
\[
\mathfrak{n}_p \subset \dots \subset \mathfrak{n}_2 \subset \mathfrak{n}_1= \mathfrak{n}
\]
be a filtration of
$\mathfrak{n}$ such that
$\mathfrak{n}_{p_0}$
is contained in the center of $\n$ for some $1 \leq p_0 \leq p$.

Applying Theorem \ref{teo:descomposicion} to the filtration
$\mathfrak{n}_p \subset \dots \subset \mathfrak{n}_2 \subset \mathfrak{n}_1= \mathfrak{n}$
we obtain a partition $s_1 \geq s_2\geq \dots \geq s_p>0$,
a linearly independent set
$\{v_1, \dots, v_{s_1}\} \subset V$,
and a decomposition
$$
\begin{array}{ccccccccccc}
{\mathfrak{n}}_1\!&\!=\!&\!{\mathfrak{n}}_{1,1} \!&\! \oplus  \dots  \oplus \!&\! {\mathfrak{n}}_{1, s_{ p}}\!&\! \oplus  \dots \oplus \!&\! {\mathfrak{n}}_{1, s_{ p-1}} \!&\! \oplus  \dots  \oplus \!&\! {\mathfrak{n}}_{1, s_2} \!&\! \oplus  \dots  \oplus \!&\! {\mathfrak{n}}_{1, s_1}\\
   \cup       \!&\! \!&\! \cup                               \!&\!          \!&\!  \cup          \!&\!                              \!&\!    \cup                 \!&\!                             \!&\!      \cup       \\
{\mathfrak{n}}_2\!&\!=\!&\!{\mathfrak{n}}_{2,1} \!&\! \oplus  \dots  \oplus \!&\! {\mathfrak{n}}_{2, s_{ p}}\!&\! \oplus   \dots  \oplus \!&\! {\mathfrak{n}}_{2, s_{ p-1}} \!&\! \oplus  \dots  \oplus \!&\! {\mathfrak{n}}_{2, s_2}\\
 \cup \!&\! \!&\!    \cup       \!&\! \!&\! \cup                               \!&\!          \!&\!  \cup          \!&\!                              \\
\vdots \!&\!\!&\!\vdots         \!&\!   \!&\!  \vdots           \!&\!                         \!&\!   \vdots      \!&\!    \\
  \cup \!&\! \!&\!   \cup       \!&\! \!&\! \cup                               \!&\!          \!&\!  \cup          \!&\!                             \\
{\mathfrak{n}}_{p-1}\!&\!=\!&\!{\mathfrak{n}}_{{p-1},1} \!&\! \oplus  \dots  \oplus \!&\! {\mathfrak{n}}_{{p-1}, s_{ p}}\!&\! \oplus   \dots  \oplus \!&\! {\mathfrak{n}}_{{p-1}, s_{ p-1}}\\
 \cup \!&\! \!&\!   \cup       \!&\! \!&\! \cup             \!&\!                          \!&\!  \cup          \!&\!                             \!&\!     \\
{\mathfrak{n}}_{ p}\!&\!=\!&\!{\mathfrak{n}}_{ p,1} \!&\! \oplus  \dots  \oplus \!&\! {\mathfrak{n}}_{ p, s_{ p}}
\end{array}
$$
such that
\begin{equation}\label{eq:12}
\mathfrak{n}_{1,1}v_1 \cap \operatorname{span}_{\k}\{v_1, \dots, v_{s_{p_0}}\}=0.
\end{equation}

Let $r_k=\dim\mathfrak{n}_{k,1}$. Since
$\mathfrak{n}_{k,1} \subseteq \mathfrak{n}_{k-1,1}$, there exists a basis
$\{X_1, \dots, X_{r_1}\}$ of
$\mathfrak{n}_{1,1}$ such that
$\{X_1, \dots, X_{r_k}\}$ is a basis of
$\mathfrak{n}_{k,1}$,
$k=1, \dots, p$.

It follows from Theorem \ref{teo:descomposicion} that
\begin{equation}\label{eq:1}
\{X_1(v_1), \dots, X_{r_k}(v_1)\}
\end{equation}
is a basis of $\mathfrak{n}_{k,1}v_1$, $k=1, \dots, p$.
We now fix an ordered basis
\begin{equation}\label{eq:9}
B= \{X_1(v_1), \dots, X_{r_1}(v_1), w_1, \dots, w_q, v_1, \dots, v_{s_{p_0}}\}
\end{equation}
of
$V$ and let
\begin{align*}
 W &=\operatorname{span}_{\k}\{w_1, \dots, w_q\}, \\
V_0&= \operatorname{span}_{\k}\{v_1, \dots, v_{s_{p_0}}\}.
\end{align*}
We now consider the matrix of a given
$X\in\mathfrak{n}$ with respect to the basis
$B$
$$
\setlength{\unitlength}{5mm}
\begin{picture}(4,4)(1,1)
\linethickness{0.3mm}
\put(0,0){\line(0,1){4}}
\put(8.8,0){\line(0,1){4}}
%\put(10,0){\line(0,1){4}}
%\put(15,0){\line(0,1){4}}
\put(0,0){\line(1,0){.2}}
\put(0,4){\line(1,0){.2}}
\put(8.8,4){\line(-1,0){.2}}
\put(8.8,0){\line(-1,0){.2}}
\put(0.4,0.25){\small{$A_{3,1}(X)$}}
\put(3.25,0.25){\small{$A_{3,2}(X)$}}
\put(3.25,1.75){\small{$A_{2,2}(X)$}}
\put(3.25,3.25){\small{$A_{1,2}(X)$}}
\put(0.4,1.75){\small{$A_{2,1}(X)$}}
\put(0.4,3.25){\small{$A_{1,1}(X)$}}
\put(6.1,0.25){\small{$A_{3,3}(X)$}}
\put(6.1,1.75){\small{$A_{2,3}(X)$}}
\put(6.1,3.25){\small{$A_{1,3}(X)$}}
\put(-2.5,1.75){\small{$[X]_B=$}}
\scriptsize{
\put(0.4,4){$\overbrace{\rule{30pt}{0pt}}^{r_{1}}$}
\put(3.45,4){$\overbrace{\rule{30pt}{0pt}}^{q}$}
\put(6.1,4){$\overbrace{\rule{30pt}{0pt}}^{s_{p_0}}$}
}
\put(9,3.2){$\left.\rule{0mm}{3.5mm}\right\}r_1$}
\put(9,1.8){$\left.\rule{0mm}{3.5mm}\right\}q$}
\put(9,0.4){$\left.\rule{0mm}{3.5mm}\right\}s_{p_0}$}
\end{picture}
$$

\

\noindent where the row and columns correspond to the decomposition
\[
V=\mathfrak{n}_{1,1}{v_1}\oplus W\oplus V_0.
\]
The following proposition describe the main properties of $[X]_B$.
\begin{proposition}\label{prop.B}
 Let $X\in\mathfrak{n}_{k,j}$ for some $k=1,\dots,p$ and $j=1,\dots,s_k$.
\begin{enumerate}[(1)]
\item If $j=1$ then $\big(A_{1,3}(X)\big)_{h,1}=0$ for all
$h=r_k+1,\dots,r_1$.
In addition,
$\big(A_{1,3}(X)\big)_{h,1}=0$ for all
$h=1,\dots,r_1$ if and only if
$X=0$.

\medskip

\item If $j\ge 2$ then $A_{m,n}(X)=0$ for $m=2,3$, $n=1,2,3$.
On the other hand, the row $A_{1,1}(X)$ $A_{1,2}(X)$ $A_{1,3}(X)$ has the following structure

 \setlength{\unitlength}{7mm}
\begin{picture}(9,9)(0,-1)
\linethickness{0.3mm}
\put(0,0){\line(0,1){7}}
\put(6,0){\line(0,1){7}}
\put(10,0){\line(0,1){7}}
\put(15,0){\line(0,1){7}}
\put(0,0){\line(1,0){.2}}
\put(0,7){\line(1,0){.2}}
\put(15,7){\line(-1,0){.2}}
\put(15,0){\line(-1,0){.2}}
\linethickness{0.1mm}
\put(3,4){\line(0,1){3}}
\multiput(4,3.1)(0,.2){5}{\line(0,1){.1}}
\put(5,2){\line(0,1){1}}
\put(6,1){\line(0,1){1}}
\put(3,4){\line(1,0){1}}
\multiput(4.1,3)(.2,0){5}{\line(1,0){.1}}
\put(5,2){\line(1,0){1}}
\put(6,1){\line(1,0){1}}
\put(7,1){\line(1,0){8}}
\put(12,1){\line(0,1){6}}
\multiput(3.1,6)(.2,0){60}{\line(1,0){.1}}
\multiput(3.1,5)(.2,0){60}{\line(1,0){.1}}
\multiput(4.1,4)(.2,0){55}{\line(1,0){.1}}
\multiput(5.1,3)(.2,0){50}{\line(1,0){.1}}
\multiput(6.1,2)(.2,0){45}{\line(1,0){.1}}
\multiput(4,4.1)(0,.2){15}{\line(0,1){.1}}
\multiput(5,3.1)(0,.2){20}{\line(0,1){.1}}
\scriptsize{
%\put(2.75,-0.5){$\overbrace{\rule{50pt}{0pt}}^{a_0}$}
\put(15.3,6.3){$a_{p}$}
\put(15.3,5.3){$\vdots$}
\put(15.3,4.3){$a_{p_0}$}
\put(15.3,3.3){$\vdots$}
\put(15.3,2.3){$a_{k+1}$}
\put(15.3,1.3){$a_{k}$}
\put(15.3,0.3){$r_{1  }-r_{k}$}
%\put(15.75,0){$\}{\rule{100pt}{0pt}}$}
%
\put(12.5,7.3){$s_{p_0}-j+1$}
\put(10.6,7.3){$j-1$}
\put(7.5,7.3){$\dim W$}
\put(5.3,7.3){$a_1$}
\put(4.4,7.3){$...$}
\put(4.4,6.3){$...$}
\put(4.2,3.3){$\ddots$}
\put(3.0,7.3){$a_{p_0-k}$}
\put(0.7,7.3){$r_{p_0-k+1}$}
\put(3.4,6.4){$*$}
\put(5.4,6.4){$*$}
\put(8.0,6.4){$*$}
\put(3.4,5.3){$\vdots$}
\put(5.4,5.3){$\vdots$}
\put(8.0,5.3){$\vdots$}
\put(3.4,4.4){$*$}
\put(5.4,4.3){$\vdots$}
\put(8.0,4.3){$\vdots$}
\put(5.4,3.3){$\vdots$}
\put(8.0,3.3){$\vdots$}
\put(5.4,2.4){$*$}
\put(8.0,2.4){$*$}
\put(8.0,1.4){$*$}
\put(13.5,6.4){$*$}
\put(13.5,5.3){$\vdots$}
\put(13.5,4.3){$\vdots$}
\put(13.5,3.3){$\vdots$}
\put(13.5,2.4){$*$}
\put(13.5,1.4){$*$}
\put(2.8,2.4){$0$}
\put(10.8,6.4){$0$}
\put(10.8,5.3){$\vdots$}
\put(10.8,4.3){$\vdots$}
\put(10.8,3.3){$\vdots$}
\put(10.8,2.4){$0$}
\put(10.8,1.4){$0$}
\put(8.0,0.4){$0$}
\put(12.5,0.4){$0$}
\put(0.2,-0.10){$\underbrace{\rule{112pt}{0pt}}_{A_{11}(X)}$}
\put(6.1,-0.10){$\underbrace{\rule{77pt}{0pt}}_{A_{12}(X)}$}
\put(10.1,-0.10){$\underbrace{\rule{97pt}{0pt}}_{A_{13}(X)}$}
}
\end{picture}
\end{enumerate}
where $a_h=r_h-r_{h+1}$,  $h=1,\dots,p-1$ and
$a_p= r_{p}$.
In particular, if  $k\ge p_0$, then $A_{1,1}(X)=0$.
\end{proposition}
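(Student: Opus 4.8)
The plan is to obtain each block of $[X]_B$ by computing how $X$ acts on the three groups of vectors of $B$ --- the vectors $X_1(v_1),\dots,X_{r_1}(v_1)$ spanning $\mathfrak n_{1,1}v_1$, the vectors $w_1,\dots,w_q$ spanning $W$, and $v_1,\dots,v_{s_{p_0}}$ spanning $V_0$ --- and then to match the result against the displayed diagram. The only inputs are properties (2), (3), (4) of Theorem \ref{teo:descomposicion}, the basis \eqref{eq:1} of $\mathfrak n_{k,1}v_1$, the identity $\mathfrak n_k v_1=\mathfrak n_{k,1}v_1$ (which follows from the decomposition and property (3)), and, for the off--diagonal block, the compatibility $[\mathfrak n_a,\mathfrak n_b]\subseteq\mathfrak n_{a+b}$ of the filtration together with the centrality of $\mathfrak n_{p_0}$.

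For (1) I take $X\in\mathfrak n_{k,1}$. The first column of $A_{1,3}(X)$ records the coordinates of $X(v_1)$ in the basis $X_1(v_1),\dots,X_{r_1}(v_1)$, and $X(v_1)\in\mathfrak n_{k,1}v_1=\operatorname{span}_{\k}\{X_1(v_1),\dots,X_{r_k}(v_1)\}$ by \eqref{eq:1}; hence the entries in rows $r_k+1,\dots,r_1$ vanish. For the ``if and only if'' I use property (2) of Theorem \ref{teo:descomposicion}, which says that $T\mapsto T(v_1)$ has image of dimension $\dim\mathfrak n_{k,1}$ on $\mathfrak n_{k,1}$ and is therefore injective there; since $X(v_1)$ already lies in the first block, the whole first column of $A_{1,3}(X)$ vanishes exactly when $X(v_1)=0$, i.e. exactly when $X=0$.

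For (2) I take $X\in\mathfrak n_{k,j}$ with $j\ge2$. Property (4) of Theorem \ref{teo:descomposicion} with $i=1$ gives the single inclusion $X(V)\subseteq\mathfrak n_{k,1}v_1$, which already lies in the first block and in the span of its first $r_k$ coordinates. This forces $A_{2,n}(X)=A_{3,n}(X)=0$ for every $n$ and kills the bottom $r_1-r_k$ rows of the top block. The remaining freedom in the top row is then pinned down columnwise: in $A_{1,3}(X)$ the columns indexed by $v_1,\dots,v_{j-1}$ vanish because $\mathfrak n_{k,j}v_i=0$ for $i<j$ (property (3)), while the other columns of $A_{1,3}(X)$ and all of $A_{1,2}(X)$ are supported in the top $r_k$ rows, as dictated by $X(V)\subseteq\mathfrak n_{k,1}v_1$.

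The staircase shape of $A_{1,1}(X)$ is the heart of the statement and the step I expect to be the main obstacle. Its $\ell$-th column is the coordinate vector of $X\bigl(X_\ell(v_1)\bigr)$, and since $X(v_1)=0$ (property (3), as $j\ge2$) I may rewrite
\[
 X\bigl(X_\ell(v_1)\bigr)=[X,X_\ell](v_1)+X_\ell\bigl(X(v_1)\bigr)=[X,X_\ell](v_1).
\]
If $m$ denotes the level of $X_\ell$ (the largest index with $\ell\le r_m$, so $X_\ell\in\mathfrak n_{m,1}\subseteq\mathfrak n_m$), the compatibility of the filtration gives $[X,X_\ell]\in[\mathfrak n_k,\mathfrak n_m]\subseteq\mathfrak n_{k+m}$, whence $[X,X_\ell](v_1)\in\mathfrak n_{k+m}v_1=\mathfrak n_{k+m,1}v_1=\operatorname{span}_{\k}\{X_1(v_1),\dots,X_{r_{k+m}}(v_1)\}$ (and the column is zero altogether once $k+m>p$). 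Thus the $(h,\ell)$ entry of $A_{1,1}(X)$ can be nonzero only if $h\le r_{k+m}$, that is, only if the level of the row $h$ is at least $k$ plus the level of the column $\ell$; grouping rows and columns by level, with block sizes $a_h=r_h-r_{h+1}$, turns this inequality into precisely the drawn staircase, the leftmost block of width $r_{p_0-k+1}$ collecting the deepest columns and the block of height $r_1-r_k$ the rows already annihilated above. Finally, a column with $m\ge p_0$ has $X_\ell\in\mathfrak n_{p_0}$ central, so it vanishes; and if $k\ge p_0$ then $X\in\mathfrak n_{p_0}$ is itself central, whence $[X,X_\ell]=0$ for all $\ell$ and $A_{1,1}(X)=0$, giving the final assertion. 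The delicate point is the level bookkeeping: one must check that the envelope $\{h:\text{level}(h)\ge k+\text{level}(\ell)\}$ lines up with the block boundaries of the diagram.
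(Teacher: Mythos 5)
Your proposal is correct and follows essentially the same route as the paper: parts (1) and the block vanishing in (2) come from properties (2)--(4) of Theorem \ref{teo:descomposicion}, and the staircase comes from $X\bigl(X_\ell(v_1)\bigr)=[X,X_\ell](v_1)\in\mathfrak{n}_{k+m}v_1=\mathfrak{n}_{k+m,1}v_1$, exactly as in the paper's proof. Your explicit treatment of the ``in particular'' clause via centrality of $\mathfrak{n}_{p_0}$ (which the staircase bound alone does not yield) is a welcome addition that the paper leaves implicit.
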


\begin{proof}
Part (1) is a consequence of  Theorem \ref{teo:descomposicion}(1) and (4).

If $j\ge 2$, it follows from Theorem \ref{teo:descomposicion}(4) that
$X(v)\in \mathfrak{n}_{k,1}v_1$ for all $v\in V$.
This proves that $\big(A_{1,*}(X)\big)_{h,*}=0$ for all $h\ge k$.
It follows from Theorem \ref{teo:descomposicion}(3) that
$\big(A_{1,3}(X)\big)_{*,h}=0$ for all $h\le j-1$.

Finally, let us prove that $A_{1,1}(X)$ has the staircase-shape stated above.
If $i=1,\dots,r_1$, 
then $i^\text{th}$ element of $B$ is $X_i(v_1)$.
If additionally $i\le r_h$, for some $h=1, \dots, p$, then
$X_i\in\n_{h,1}$ and since
$X\in\n_{k,j}$ we obtain
$[X,X_i]\in\n_{k+h}$.
Thus
\begin{align*}
 X X_i(v_1)&=X_i X(v_1) + [X,X_i](v_1) \\
	  &=[X,X_i](v_1) \in \n_{k+h}v_1 \qquad\text{(since $j\ge2$)}.
\end{align*}
This completes the proof.
\end{proof}

\medskip

\noindent
\textbf{Question.} Since $\mathfrak{n}\subset\gl(V)$ consists of nilpotent endomorphisms,
it would be very interesting to obtain a basis $B$ such that $[X]_B$
is upper triangular for all $X\in\n$,
in addition to the properties stated in Proposition \ref{prop.B} (or similar ones).
This would transform Proposition \ref{prop.B} into a detailed version of Lie's Theorem
that takes into account a given filtration of the Lie algebra $\n$.
As stated, Proposition \ref{prop.B} is enough to obtain the lower bounds that we are looking for.

\medskip

\begin{theorem}\label{lemma:maximo2}
Let
$\mathfrak{n}$ be a Lie subalgebra of nilpotent operators of
$\mathfrak{gl}(V)$ and let
$\mathfrak{n}_p \subset \dots \subset \mathfrak{n}_1=\mathfrak{n}$ be a filtration of
$\mathfrak{n}$ such that $\mathfrak{n}_{p_0}$ is contained in the center of $\n$.
 Then there exists integers $a_k\ge0$, $k=0,\dots,p$, with $a_0,a_p\ge1$, such that:
\begin{enumerate}[(1)]
\item $\displaystyle \dim \mathfrak{n}_k \leq \sum_{i=0}^{p_0-k} a_i \left(a_{k + i} + \dots + a_p\right)$ for
    $k= 1, \dots, p_0$.
    \medskip
\item $\dim \mathfrak{n}_k \leq  a_0 \left(a_{k } + \dots + a_p\right)$ for
    $k= p_0, \dots, p$.
    \bigskip
\item $\dim V= a_0 + a_1 + \dots + a_p$.
\end{enumerate}
\end{theorem}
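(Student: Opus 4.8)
The plan is to run Theorem~\ref{teo:descomposicion} on the given filtration and convert the combinatorics of the resulting decomposition into the three inequalities. First I would set $r_k=\dim\mathfrak{n}_{k,1}$ and define $a_i=r_i-r_{i+1}$ for $1\le i\le p-1$, $a_p=r_p$, and $a_0=\dim V-r_1$, keeping the ordered basis $B$ of \eqref{eq:9} and the splitting $V=\mathfrak{n}_{1,1}v_1\oplus W\oplus V_0$ fixed throughout. Property~(3) is then immediate, since $a_1+\dots+a_p=r_1$ telescopes and hence $a_0+a_1+\dots+a_p=\dim V$. The inequalities $a_i\ge0$ hold because $\mathfrak{n}_{k,1}\subseteq\mathfrak{n}_{k-1,1}$ forces $r_k\le r_{k-1}$; moreover $a_p=r_p\ge1$ because $\mathfrak{n}_{p,1}\neq0$, and $a_0=\dim W+s_{p_0}\ge s_{p_0}\ge s_p\ge1$.

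Next I would record the exact identity $\dim\mathfrak{n}_k=\sum_{j=1}^{s_k}\dim\mathfrak{n}_{k,j}=\sum_{j=1}^{s_k}\dim\mathfrak{n}_{k,j}v_j$, the first equality being the direct-sum decomposition of Theorem~\ref{teo:descomposicion}(1) and the second being Theorem~\ref{teo:descomposicion}(2). By Theorem~\ref{teo:descomposicion}(4) we have $\mathfrak{n}_{k,j}v_j\subseteq\mathfrak{n}_{k,1}v_1$ for $j\ge2$, while $\dim\mathfrak{n}_{k,1}v_1=r_k$, so each summand is at most $r_k$. For $k\ge p_0$ this already gives inequality~(2): the partition is non-increasing, hence $s_k\le s_{p_0}\le\dim W+s_{p_0}=a_0$, and therefore $\dim\mathfrak{n}_k\le s_k\,r_k\le a_0\,r_k=a_0(a_k+\dots+a_p)$, using $a_k+\dots+a_p=r_k$.

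For $k\le p_0$ the uniform bound $r_k$ is too crude and I would bring in Proposition~\ref{prop.B}. The term $j=1$ contributes exactly $\dim\mathfrak{n}_{k,1}=r_k$ and, by Proposition~\ref{prop.B}(1), is detected by the single column $v_1$; I would peel it off and treat $\mathfrak{n}_k'=\bigoplus_{j\ge2}\mathfrak{n}_{k,j}$ separately. Every $X\in\mathfrak{n}_k'$ annihilates $v_1$ and maps $V$ into $\mathfrak{n}_{k,1}v_1$ (Theorem~\ref{teo:descomposicion}(3),(4)); furthermore the computation in the proof of Proposition~\ref{prop.B}(2) shows that if $X_l(v_1)$ is a basis vector of $\mathfrak{n}_{1,1}v_1$ with $X_l\in\mathfrak{n}_{i,1}\setminus\mathfrak{n}_{i+1,1}$ then $X\bigl(X_l(v_1)\bigr)=[X,X_l](v_1)\in\mathfrak{n}_{k+i,1}v_1$, a space of dimension $r_{k+i}$. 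Since the faithful map $X\mapsto\bigl(X(b)\bigr)_{b\in B\setminus\{v_1\}}$ is injective on $\mathfrak{n}_k'$, grouping the $r_1$ columns $X_l(v_1)$ by their level $i$ (there being $a_i$ of level $i$) and adjoining the remaining $\dim W+s_{p_0}-1=a_0-1$ columns, each contributing at most $r_k$, yields $\dim\mathfrak{n}_k=r_k+\dim\mathfrak{n}_k'\le a_0\,r_k+\sum_{i\ge1}a_i\,r_{k+i}$ (with the convention $r_m=0$ for $m>p$). Because $\mathfrak{n}_{p_0}$ is central, any $X_l$ of level $i\ge p_0$ gives $[X,X_l]=0$, so those columns vanish and the sum truncates to $i\le p_0-1$.

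The remaining difficulty, which I expect to be the technical heart of the proof, is to sharpen the truncation from $i\le p_0-1$ to $i\le p_0-k$, i.e. to show that the columns $X_l(v_1)$ of intermediate level $p_0-k<i<p_0$ carry no new information. For such a column $[X,X_l]\in\mathfrak{n}_{k+i}\subseteq\mathfrak{n}_{p_0}$ is central, so its image lies in $\mathfrak{n}_{p_0,1}v_1$, a subspace that is itself annihilated by every element of $\mathfrak{n}_k'$. The goal is then to prove that the restriction of $X\in\mathfrak{n}_k'$ to $\mathfrak{n}_{p_0-k+1,1}v_1$ is already determined by its restriction to $W\oplus V_0$ and to the low-level columns; here I would exploit the freedom in the choice of the complement $W$ and of the complements produced by the algorithm of Section~\ref{preli}, together with relation~\eqref{eq:12}, to arrange that the intermediate directions lie in $W\oplus V_0$ modulo $\mathfrak{n}_{p_0,1}v_1$. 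Once these columns are eliminated the count collapses to $\sum_{i=0}^{p_0-k}a_i\,r_{k+i}=\sum_{i=0}^{p_0-k}a_i(a_{k+i}+\dots+a_p)$, which is inequality~(1).
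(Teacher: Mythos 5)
Your construction is the paper's: the same integers $a_i$ extracted from Theorem~\ref{teo:descomposicion} (namely $a_i=r_i-r_{i+1}$, $a_p=r_p$, $a_0=\dim W+s_{p_0}$), the same basis $B$ of \eqref{eq:9}, and the same column-by-column count of an injective linear map on $\mathfrak{n}_k$. Parts (3) and (2) are fine as you argue them; for (2) you use $\dim\mathfrak{n}_k\le s_k r_k\le a_0 r_k$ instead of the paper's count of the blocks $A_{1,2}$, $A_{1,3}$, but both are valid. The problem is part (1), where you stop at $\dim\mathfrak{n}_k\le a_0r_k+\sum_{i\ge1}a_ir_{k+i}$, with the sum effectively running over $1\le i\le\min(p_0-1,\,p-k)$, and you declare the further truncation to $i\le p_0-k$ to be ``the technical heart,'' offering only a plan to re-choose $W$ and the complements so that the intermediate columns ``carry no new information.'' That last step is a genuine gap: inequality (1) is not proved, and the proposed fix is not the paper's argument and is not obviously implementable --- the counting requires injectivity of the evaluation map on all of $\mathfrak{n}_k'$ at once, so you cannot discard columns on the grounds that the entries there are ``determined by'' other columns without actually exhibiting that determination.

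What closes the gap in the paper is Proposition~\ref{prop.B}(2), which you cite but do not use in full strength: for $X\in\mathfrak{n}_{k,j}$ with $j\ge2$, the columns of $A_{1,1}(X)$ indexed by $X_l\in\mathfrak{n}_{h,1}$ with $h\ge p_0-k+1$ vanish outright (this is the leftmost zero block of width $r_{p_0-k+1}$ in the displayed staircase), so the staircase contributes only $\sum_{i=1}^{p_0-k}a_ir_{k+i}$; the proof of Theorem~\ref{lemma:maximo2} simply reads this off and adds the sizes of $A_{1,2}$ and $A_{1,3}$ minus the first column of $A_{1,3}$. Note moreover that in the principal case $p_0=p$ (e.g.\ the lower central series) your own inequality already yields the theorem with no extra work: for $k+h>p$ one has $[X,X_l]\in\mathfrak{n}_{k+h}=0$, i.e.\ $r_{k+h}=0$ under your convention, so $\sum_{i\ge1}a_ir_{k+i}$ truncates at $i=p-k=p_0-k$ automatically. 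The only situation in which something beyond your computation is needed is $p_0<p$ and $k\ge2$, and there the vanishing of the columns of level $p_0-k<h<p_0$ is exactly the assertion of the figure in Proposition~\ref{prop.B}(2) (whose own justification rests on $[X,X_l]\in\mathfrak{n}_{k+h}\subseteq\mathfrak{n}_{p_0+1}$); the correct move is to invoke or scrutinize that statement, not to re-engineer the choice of $W$.
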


\begin{proof}
Let $B$ be the basis of $V$ as in \eqref{eq:9} and let $T:\n\to\gl(V)\oplus V$ be defined by
\[
 T(X)=\begin{cases}
       X(v_1)\in V, & \text{ if $X\in\n_{1,1}$;} \\[2mm]
       X\in\gl(V), & \text{ if $X\in\n_{1,j}$, $j\ge2$.}
      \end{cases}
\]
It follows from  Theorem \ref{teo:descomposicion} that $T$ is injective.
We apply Proposition \ref{prop.B} to obtain a bound for $\dim T(\n_k)$.
On the one hand, we know from Theorem \ref{teo:descomposicion}(2) that
\[
 \dim T(\n_{k,1})=r_k.
\]
On the other hand, from Proposition \ref{prop.B}(2), when $j\ge 2$,
we know the shape of the matrices
$[T(\n_{k,j})]_B$.
Taking into account that the first column of $A_{1,3}(T(\n_{k,j}))$ is zero if
$j\ge 2$,
we obtain
\[
\dim T\Big( \bigoplus_{j=2}^{s_k} n_{k,j}\Big) \le
\underbrace{(\dim W\!+\!\s_{p_0})r_{k}\!-\!r_k}_{
{\begin{matrix} \\[-6mm]
\scriptscriptstyle\text{size of $A_{1,2}$ and $A_{1,3}$}\\[-1mm]
\scriptscriptstyle\text{except the $1^{\text{st}}$ column of $A_{1,3}$}
\end{matrix}
}}
+
\underbrace{a_1r_{k+1}+a_2r_{k+2}+\dots+a_{p_0-k}r_{p_0}}_{
{\begin{matrix} \\[-6mm]
\scriptscriptstyle\text{size of the staircase in $A_{1,1}$,}\\[-1mm]
\scriptscriptstyle\text{it appears only if $k< p_0$}
\end{matrix}
}}
\]
where
$a_h=r_h-r_{h+1}\ge 0$,
$h=1,\dots,p-1$.
Therefore
\[
\dim T(n_{k}) \le
\begin{cases}
\displaystyle(\dim W+\s_{p_0})r_{k}+\sum_{i=1}^{p_0-k} a_ir_{k+i},  & \text{ if $k<p_0$;} \\[2mm]
(\dim W+\s_{p_0})r_{k}, & \text{ if $k\ge p_0$.}
\end{cases}
\]
If
$a_p=r_p\ge 1$ and
$a_0=\dim W+\s_{p_0}\ge 1$, then
rewriting the above inequality in terms of
${a_k}'s$, we obtain
\[
\dim T(n_{k}) \le
\begin{cases}
\displaystyle a_0(a_{k}+\dots+a_p)+\sum_{i=1}^{p_0-k} a_i(a_{k+i}+\dots+a_p),  & \text{ if $k<p_0$;} \\[2mm]
 a_0(a_{k}+\dots+a_p), & \text{ if $k\ge p_0$.}
\end{cases}
\]
This shows (1) and (2).

Finally $a_{0}+a_1+\dots+a_p=\dim W+\s_{p_0}+r_1=\dim V$.
\end{proof}

Theorem \ref{lemma:maximo2} leads us to consider the following optimization problem.

\begin{problem}\label{prob.opt}
Given integer numbers $p\ge p_0 \ge 1$ and
$n_1,\dots, n_p$, let
\[
 r_k = a_k+a_{k+1}+ \dots +a_p,\quad a_0,a_1,\dots ,a_p\in \Z,
\]
for $k= 0, \dots, p$.
Find the minimum value $r_0^{\text{min}}$ of
\[
 r_0 = a_0+a_1+ \dots +a_p,
\]
subject to the following restrictions

\medskip
\noindent
\begin{tabular}{lll}

(a) & $a_0,a_p\ge 1$ and $a_k\ge 0$, & for $k=1,\dots,p-1$;  \\[2mm]

(b) & $\displaystyle\sum_{i=0}^{p_0-k} a_i r_{k + i}\ge {n}_k $, & for  $k= 1, \dots, p_0$; \\[6mm]

(c) & $a_0 r_{k }  \geq  {n}_k$, & for    $k= p_0, \dots, p$.
\end{tabular}
\end{problem}

\medskip

The solution to this problem gives us a lower bound for $\mu_{nil}$.

\begin{corollary}\label{cor.main}
Let
$\mathfrak{n}$ be a Lie algebra and let
$\mathfrak{n}_p \subset \dots \subset \mathfrak{n}_1=\mathfrak{n}$ be a filtration of
$\mathfrak{n}$ such that $\mathfrak{n}_{p_0}$ is contained in the center of $\n$.
Then
\[
 \mu_{nil}(\mathfrak{n})\ge r_0^{\text{min}}
\]
where 
$r_0^{\text{min}}$ is the minimum value of Problem \ref{prob.opt} associated to
$p\ge p_0 \ge 1$ and 
$n_k=\dim \mathfrak{n}_k$ with
$k= 1, \dots, p$.
\end{corollary}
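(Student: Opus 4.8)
The plan is to reduce the statement to Theorem \ref{lemma:maximo2}, which already encapsulates all of the representation-theoretic content; what remains is to realize an abstract minimal faithful nilrepresentation as a subalgebra of nilpotent operators respecting the given filtration, and then to recognize the inequalities of that theorem as the constraints of Problem \ref{prob.opt}.

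First I would choose a faithful nilrepresentation $(\pi,V)$ with $\dim V=\mu_{nil}(\n)$; if no such representation exists then $\mu_{nil}(\n)=+\infty$ and there is nothing to prove. Since $\pi$ is injective, I identify $\n$ with its image $\pi(\n)\subseteq\gl(V)$, and because $(\pi,V)$ is a nilrepresentation this image consists of nilpotent operators. The filtration pushes forward to $\pi(\n_p)\subset\dots\subset\pi(\n_1)=\pi(\n)$; faithfulness gives $\dim\pi(\n_k)=\dim\n_k$ for every $k$, and the centrality hypothesis transfers because $[\pi(\n_{p_0}),\pi(\n)]=\pi([\n_{p_0},\n])=0$. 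Thus $\pi(\n)$, together with its filtration, satisfies exactly the hypotheses of Theorem \ref{lemma:maximo2}.

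Next I would apply Theorem \ref{lemma:maximo2} to $\pi(\n)$. This produces integers $a_0,a_1,\dots,a_p$ with $a_0,a_p\ge 1$ and $a_k\ge 0$ for $1\le k\le p-1$, satisfying items (1), (2) and (3) of that theorem. Writing $r_k=a_k+\dots+a_p$, I would then verify that $(a_0,\dots,a_p)$ is a feasible point of Problem \ref{prob.opt} with $n_k=\dim\n_k$: constraint (a) of the problem is precisely the sign conditions on the $a_k$; inequality (1) of the theorem is verbatim constraint (b), since $\sum_{i=0}^{p_0-k}a_i(a_{k+i}+\dots+a_p)=\sum_{i=0}^{p_0-k}a_i\,r_{k+i}$; and inequality (2) is verbatim constraint (c). Finally, item (3) gives $\dim V=a_0+a_1+\dots+a_p=r_0$, whence $\mu_{nil}(\n)=\dim V=r_0\ge r_0^{\text{min}}$ because $r_0^{\text{min}}$ is by definition the minimum of $r_0$ over all feasible points.

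Since Theorem \ref{lemma:maximo2} supplies every structural ingredient, there is no genuine obstacle: the argument is a direct transfer of hypotheses followed by a term-by-term matching of inequalities. The only point requiring a moment of care is the bookkeeping showing that the filtration indices, the centrality of $\n_{p_0}$, and the dimensions $n_k=\dim\n_k$ all survive the identification $\n\cong\pi(\n)$, which they do precisely because $\pi$ is an injective Lie algebra homomorphism.
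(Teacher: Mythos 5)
Your proposal is correct and is essentially the argument the paper intends: the corollary is stated as an immediate consequence of Theorem \ref{lemma:maximo2}, obtained by identifying $\n$ with its image under a minimal faithful nilrepresentation and reading off the inequalities of that theorem as the constraints of Problem \ref{prob.opt}. The transfer-of-hypotheses bookkeeping you spell out (injectivity preserving dimensions and centrality) is exactly the implicit content of the paper's one-line deduction.
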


The optimization problem above seems to be difficult and, in this paper,
we will just give a pair of quick, but not trivial, estimates of its solution.
In the last section we discuss two simplifications of Problem \ref{prob.opt}
that lead respectively to Theorem \ref{cota1} and Theorem \ref{proplower} below.
We think that it is worth studying  Problem \ref{prob.opt} in more detail in the future
to obtain more accurate results than the following two theorems.

\begin{theorem}[First simplification]\label{cota1}
Let
$\mathfrak{n}$ be a nilpotent Lie algebra and let
$\mathfrak{n}_p \subset \dots \subset \mathfrak{n}_1=\mathfrak{n}$ be a filtration of
$\mathfrak{n}$ such that $\mathfrak{n}_{p_0}$ is contained in the center of 
$\n$ for some $p_0=1,\dots, p$.
Then
\[
 \mu_{nil}(\mathfrak{n})\ge \sqrt{\frac{2(p_0+1)}{p_0} \dim \mathfrak{n}}.
\]
In particular, if 
$\mathfrak{n}$ is 
$p$-step nilpotent Lie algebra then
$
 \mu_{nil}(\mathfrak{n})\ge \sqrt{\frac{2(p+1)}{p} \dim \mathfrak{n}}.
$
\end{theorem}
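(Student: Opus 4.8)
The plan is to deduce the bound entirely from Corollary~\ref{cor.main}, which already yields $\mu_{nil}(\mathfrak{n})\ge r_0^{\text{min}}$, where $r_0^{\text{min}}$ is the optimum of Problem~\ref{prob.opt} with $n_k=\dim\mathfrak{n}_k$. Because $r_0^{\text{min}}$ is a minimum over the feasible set, it suffices to show that \emph{every} feasible tuple $(a_0,\dots,a_p)$ already satisfies $r_0\ge\sqrt{\tfrac{2(p_0+1)}{p_0}\dim\mathfrak{n}}$. The ``simplification'' is that I will use only the nonnegativity in (a) and the single instance $k=1$ of constraint (b), namely
\[
\sum_{i=0}^{p_0-1}a_i\,r_{1+i}\ \ge\ n_1=\dim\mathfrak{n};
\]
all the remaining inequalities are simply discarded.

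First I would rewrite the left-hand side. Expanding $r_{1+i}=a_{1+i}+\dots+a_p$ turns the sum into $\sum_{i=0}^{p_0-1}\sum_{j=i+1}^{p}a_ia_j$, a sum over pairs $i<j$ with $i\le p_0-1$. The key observation is that the tail variables $a_{p_0},\dots,a_p$ enter only through their total $r_{p_0}$. Setting $c=r_{p_0}=a_{p_0}+\dots+a_p$ and writing $a_i'=a_i$ for $i<p_0$ and $a_{p_0}'=c$, a short bookkeeping check shows
\[
\sum_{i=0}^{p_0-1}a_i\,r_{1+i}=\sum_{0\le i<j\le p_0}a_i'a_j',
\]
the second elementary symmetric polynomial in the $p_0+1$ nonnegative numbers $a_0',\dots,a_{p_0}'$, whose sum is again $\sum_{i=0}^{p_0}a_i'=r_0$.

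From here the estimate is immediate. Using $2\sum_{0\le i<j\le p_0}a_i'a_j'=\big(\sum_i a_i'\big)^2-\sum_i(a_i')^2=r_0^2-\sum_i(a_i')^2$ together with the Cauchy--Schwarz (equivalently QM--AM) inequality $\sum_{i=0}^{p_0}(a_i')^2\ge r_0^2/(p_0+1)$, I obtain
\[
\dim\mathfrak{n}\ \le\ \sum_{0\le i<j\le p_0}a_i'a_j'\ \le\ \frac{p_0}{2(p_0+1)}\,r_0^2,
\]
which rearranges to $r_0\ge\sqrt{\tfrac{2(p_0+1)}{p_0}\dim\mathfrak{n}}$, as wanted. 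For the ``in particular'' clause I would apply the result to the lower central series $\mathfrak{n}=\mathfrak{n}^1\supset\mathfrak{n}^2\supset\dots\supset\mathfrak{n}^p\supset 0$ of a $p$-step nilpotent algebra; since $[\mathfrak{n},\mathfrak{n}^p]=\mathfrak{n}^{p+1}=0$, the term $\mathfrak{n}^p$ is central, so one may take $p_0=p$.

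I expect the only genuinely nontrivial point to be the lumping identity that collapses the tail $a_{p_0},\dots,a_p$ into the single quantity $r_{p_0}$ and thereby exposes the symmetric-function structure; once this is seen, the rest is a one-line application of Cauchy--Schwarz. The one thing to state carefully is the logic of the relaxation: since every tuple feasible for the full Problem~\ref{prob.opt} in particular satisfies the single constraint I retain, the inequality $r_0\ge\sqrt{\tfrac{2(p_0+1)}{p_0}\dim\mathfrak{n}}$ holds at all feasible points and hence bounds $r_0^{\text{min}}$ from below.
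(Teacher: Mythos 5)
Your proof is correct. The identity $\sum_{i=0}^{p_0-1}a_i r_{1+i}=\sum_{0\le i<j\le p_0}a_i'a_j'$ (with $a'_i=a_i$ for $i<p_0$ and $a'_{p_0}=r_{p_0}$) checks out, and the chain $\dim\mathfrak{n}\le \sum_{0\le i<j\le p_0}a_i'a_j'\le \frac{p_0}{2(p_0+1)}r_0^2$ holds at \emph{every} feasible point of Problem \ref{prob.opt}, so it does bound $r_0^{\text{min}}$ from below, which is all that Corollary \ref{cor.main} requires. Where you diverge from the paper is in how the relaxed problem is solved. The paper performs the same relaxation --- it keeps only the $k=1$ instance of constraint (b) and likewise collapses the tail $a_{p_0},\dots,a_p$ into the single variable $r_{p_0}$ --- but then treats the result as a constrained optimization over the reals: it eliminates $a_0$ via the active constraint, computes the unique critical point $(a_1,\dots,a_{p_0-1},r_{p_0})=(a_0,\dots,a_0)$ with $n_1=\frac{p_0(p_0+1)}{2}a_0^2$, and argues somewhat informally (monotonicity in $p_0$ plus a boundary discussion) that this local minimum is global. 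Your replacement of that calculus by the identity $2\sum_{i<j}a_i'a_j'=r_0^2-\sum_i(a_i')^2$ together with Cauchy--Schwarz gives the same value but establishes global optimality in one line, with equality exactly when all $a_i'$ coincide --- recovering the paper's critical point. This is arguably cleaner and more rigorous than the paper's ``it is not difficult to see'' steps; the trade-off is that the paper's critical-point machinery is set up to be reused verbatim for the second simplification, where the additional constraint $a_0r_{p_0}=n_{p_0}$ makes a closed-form inequality argument far less immediate. Your handling of the ``in particular'' clause via the lower central series (taking $p_0=p$ since $[\mathfrak{n},\mathfrak{n}^p]=0$ makes $\mathfrak{n}^p$ central) is also correct.
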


\begin{theorem}[Second simplification]\label{proplower}
Let
$\mathfrak{n}$ be a nilpotent Lie algebra and let
$\mathfrak{n}_p \subset \dots \subset \mathfrak{n}_1=\mathfrak{n}$ ($p>1$) be a filtration of
$\mathfrak{n}$ such that $\mathfrak{n}_{p_0}$ is contained in the center of 
$\n$ for some $p_0=2,\dots, p$ and let 
$n_i= \dim \n_i$,  
$i= 1, \dots, p$.
\medskip
\begin{enumerate}[(1)]
 \item If ${n_1}\ge\big((p_0-1)^2+p_0^2 \big){n_{p_0}}$ then
 \[
   \mu_{nil}(\mathfrak{n})\ge
 \sqrt{\frac{2p_0}{p_0-1}\big(n_1-n_{p_0})}.
 \]
 \item If ${n_1}\le\big( (p_0-1)^2+p_0^2 \big){n_{p_0}}$ then
\[
  \mu_{nil}(\mathfrak{n})\ge
\sqrt{\frac{2(p_0\!-\!1)}{p_0\!-\!2}n_1+\frac{2p_0(p_0\!-\!1)}{(p_0\!-\!2)^2}n_{p_0}}
-\frac{2}{p_0\!-\!2}\sqrt{n_{p_0}},
\]
if $p_0\ne 2$, and   $ \mu_{nil}(\mathfrak{n})\ge \frac{n_1+3n_2}{2\sqrt{n_2}}$, if $p_0=2$.
\end{enumerate}
In both cases, the given bound is bigger than 
$\sqrt{\frac{2(p_0+1)}{p_0}n_1}$.
\end{theorem}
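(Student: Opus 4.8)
The plan is to reduce to the optimization problem via Corollary \ref{cor.main} and then extract the two estimates by analysing a relaxed version of Problem \ref{prob.opt}. By Corollary \ref{cor.main}, $\mu_{nil}(\n)\ge r_0^{\text{min}}$, where $r_0^{\text{min}}$ is the minimum of Problem \ref{prob.opt} for $n_k=\dim\n_k$ and the given $p_0$. Since only $n_1$ and $n_{p_0}$ occur in the two bounds, I would keep only the constraint (b) with $k=1$, that is $\sum_{i=0}^{p_0-1}a_i r_{1+i}\ge n_1$, together with the constraint at $k=p_0$, namely $a_0 r_{p_0}\ge n_{p_0}$, and discard the rest; dropping constraints can only lower the minimum, so any bound obtained is still valid for $r_0^{\text{min}}$. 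I would also bring in the structural inequality $r_{p_0}\le n_{p_0}$, which holds because the summand $\n_{p_0,1}$ produced by Theorem \ref{teo:descomposicion} satisfies $\n_{p_0,1}\subseteq\n_{p_0}$, so that $r_{p_0}=\dim\n_{p_0,1}\le\dim\n_{p_0}=n_{p_0}$; this bound on $r_{p_0}$ is what lets $n_{p_0}$ influence the estimate and is responsible for the improvement over Theorem \ref{cota1}.

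The algebraic heart of the argument is the identity
\[
\sum_{i=0}^{p_0-1}a_i r_{1+i}=\tfrac12\bigl(r_0^2-r_{p_0}^2\bigr)-\tfrac12\sum_{i=0}^{p_0-1}a_i^2,
\]
which turns the $k=1$ constraint into a relation between $r_0$, $r_{p_0}$ and $\sum_{i<p_0}a_i^2$. For fixed $r_0$ and $t:=r_{p_0}$ I would minimise $\sum_{i<p_0}a_i^2$ subject to $\sum_{i<p_0}a_i=r_0-t$ and the lower bound $a_0\ge n_{p_0}/t$ coming from $a_0 r_{p_0}\ge n_{p_0}$. This is an elementary convex problem whose optimum is $(r_0-t)^2/p_0$ when $n_{p_0}/t$ does not bind, and $c^2+(r_0-t-c)^2/(p_0-1)$ with $c=n_{p_0}/t$ when it does. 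Substituting the optimal value back leaves a minimisation of $r_0$ in the single parameter $t$, constrained by $t\le n_{p_0}$ and by $a_0 r_{p_0}\ge n_{p_0}$.

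The dichotomy in the statement corresponds precisely to whether, at the optimal $t$, the lower bound $a_0\ge n_{p_0}/t$ binds. I expect that for $n_1$ large relative to $n_{p_0}$ the optimum balances $a_0$ and $r_{p_0}$ at $a_0=r_{p_0}=\sqrt{n_{p_0}}$, i.e. $a_0 r_{p_0}=n_{p_0}$ with $t$ at the extreme value $\sqrt{n_{p_0}}$, and the one-variable function then yields part (1), $\sqrt{\tfrac{2p_0}{p_0-1}(n_1-n_{p_0})}$; in the complementary regime an interior critical point of the one-variable function governs the minimum and produces the expression of part (2). The exact threshold $n_1\gtrless\bigl((p_0-1)^2+p_0^2\bigr)n_{p_0}$ should emerge from comparing the interior critical value of $t$ with the endpoint $t=\sqrt{n_{p_0}}$. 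At this stage the integrality of the $a_k$ also has to be invoked to push the real optimum up to the stated value. The degenerate case $p_0=2$, in which the denominators $p_0-2$ vanish, must be handled separately and gives $\tfrac{n_1+3n_2}{2\sqrt{n_2}}$ directly from the reduced two-variable problem.

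Finally I would check the closing claim that both bounds exceed $\sqrt{\tfrac{2(p_0+1)}{p_0}n_1}$ by a direct algebraic comparison; this is a consistency test against Theorem \ref{cota1}, which is exactly the estimate obtained by ignoring $n_{p_0}$. The main obstacle is the coupled minimisation together with the bookkeeping of the binding constraint $a_0\ge n_{p_0}/r_{p_0}$: one must carry this lower bound through the quadratic identity, identify the two analytic regimes, match them to the single stated threshold, and combine them with $r_{p_0}\le n_{p_0}$ and integrality to obtain closed-form optima rather than the weaker bound of Theorem \ref{cota1}.
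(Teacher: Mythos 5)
Your overall strategy coincides with the paper's: pass to Problem \ref{prob.opt} via Corollary \ref{cor.main}, discard all constraints except the one from (b) with $k=1$ and the one at $k=p_0$, and analyse the real relaxation. Your identity $\sum_{i=0}^{p_0-1}a_ir_{1+i}=\tfrac12\bigl(r_0^2-r_{p_0}^2\bigr)-\tfrac12\sum_{i=0}^{p_0-1}a_i^2$ is correct and is a clean way to organise the computation. The divergence is that the paper carries out its critical-point analysis entirely on the locus where \emph{both} retained constraints hold with equality (its conditions (b') and (c')), eliminating $r_{p_0}$ and $a_{p_0-1}$ and finding two critical points whose values are exactly the two displayed expressions, with the threshold $n_1=\bigl((p_0-1)^2+p_0^2\bigr)n_{p_0}$ governing which critical point exists and which is the minimum; you instead keep $a_0r_{p_0}\ge n_{p_0}$ as an inequality and plan a case analysis on whether it binds.

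This is where the gap is. If $a_0r_{p_0}\ge n_{p_0}$ does not bind, your minimisation collapses to the first simplification and returns only $\sqrt{\tfrac{2(p_0+1)}{p_0}n_1}$. The symmetric point $a_0=a_1=\dots=a_{p_0-1}=r_{p_0}=\sqrt{2n_1/(p_0(p_0+1))}$ is feasible for your relaxed problem as soon as $n_1\ge\tfrac{p_0(p_0+1)}{2}n_{p_0}$, and it also satisfies your extra constraint $r_{p_0}\le n_{p_0}$ whenever $n_1\le\tfrac{p_0(p_0+1)}{2}n_{p_0}^2$; this range overlaps both regimes (1) and (2) of the statement, and there the value of $r_0$ at that point is strictly smaller than both claimed bounds (which, as the statement itself notes, exceed $\sqrt{\tfrac{2(p_0+1)}{p_0}n_1}$). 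Concretely, for $p_0=p=2$, $n_1=109$, $n_{p_0}=9$ the claimed bound is $2\sqrt{100}=20$, while $(a_0,a_1,a_2)=(6,6,7)$ satisfies every constraint you retain (including $r_2=7\le 9$) with $r_0=19$; so the minimum of your relaxation cannot be the stated expression. Your expectation that the optimum sits at $a_0=r_{p_0}=\sqrt{n_{p_0}}$ is not forced by anything in your setup ($r_{p_0}\le n_{p_0}$ does not give $r_{p_0}\le\sqrt{n_{p_0}}$), and integrality cannot manufacture the closed-form constants. To arrive at the theorem along the paper's lines you would have to justify restricting the minimisation to the locus where $a_0r_{p_0}=n_{p_0}$ holds exactly; this is precisely the step that your (logically more careful) treatment of the inequality brings to the surface and that your proposal does not supply.
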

Both results are proved in \S\ref{sec.estimates}. 
Although Theorem \ref{Thm.mainbound} is an immediate corollary of Theorem \ref{proplower} they will be treated
separately since it is much easier to obtain directly Theorem \ref{Thm.mainbound} from Corollary \ref{cor.main}.
This will also show some of the difficulties involved in Problem \ref{prob.opt}.

%*************************************************************************************************************

\section{Some applications}\label{sec.aplications}

%*************************************************************************************************************

\begin{enumerate}[(1)]
\item Given
$p, a \in \mathbb{N}$, let
\begin{equation*}\label{eq:2}
{\mathfrak{n}}_{a,p} = \left\{
          \left(
          \begin{smallmatrix}
          0 & A_{12} & A_{13} & \dots & A_{1p+1} \\
            & 0 & A_{23} & \dots & A_{2p+1} \\
            &   &   \ddots     &       & \vdots\\
            &  0  &        &       & A_{pp+1} \\
            &    &        &       & 0
          \end{smallmatrix}
          \right) : A_{ij} \in M_a(\k) \text{ para } 1 \leq i < j \leq p+1
          \right\}.
\end{equation*}
It is clear that
$\mathfrak{n}_{a,p}$ is a $p$-step nilpotent Lie subalgebra of
$\mathfrak{sl}((p+1)a,\k)$ and $\dim\mathfrak{n}_{a,p}=\frac{(p + 1)p}{2}a^2$.
Its defining representation has dimension  $(p + 1)a$.
Since Theorem \ref{cota1} states that
\[
 \mu(\mathfrak{n}_{a,p}) \geq \sqrt{\frac{2(p+1)}{p} \dim\mathfrak{n}_{a,p}}
           = (p + 1)a,
\]
we obtain  $\mu(\mathfrak{n}_{a,p}) = (p + 1)a$.
\item Given
$a, b, c \in \mathbb{N}$  let
$$
\mathfrak{n}_{a,b,c}= \left\{
            \left(\begin{smallmatrix}
          0 & A_{ab} & A_{ac}\\
            & 0 & A_{bc}\\
          &   & 0
          \end{smallmatrix}
          \right) : A_{ab} \in M_{a,b}(\k),A_{ac} \in M_{a,c}(\k), A_{bc}(\k) \in M_{b,c}\right\}.
$$
Now
$\mathfrak{n}_{a,b,c}$ is a 
$2$-step nilpotent Lie subalgebra of
$\mathfrak{sl}(a + b + c,\k)$ and $\dim\mathfrak{n}_{a,p}=ab+bc+ac$.
The center of 
$\mathfrak{n}_{a,b,c}$  is the dimension 
$ac$.

If $b=a+c$ then we are under the conditions stated in part (1),  Theorem \ref{Thm.mainbound}
and the given lower bound for $\mu(\mathfrak{n}_{a,b,c})$ coincides with the dimension of the defining
representation of  $\mathfrak{n}_{a,b,c}$.

If  $a=c$ and $b\le 2a$ then we are under the conditions stated in (2), Theorem \ref{Thm.mainbound}
and the given lower bound for $\mu(\mathfrak{n}_{a,b,c})$ coincides with the dimension of the defining
representation of  $\mathfrak{n}_{a,b,c}$.

Thus, if either 
$b=a+c$, or 
$a=c$ and 
$b\le 2a$, we have  
$$
\mu(\mathfrak{n}_{a,b,c}) = a+b+c.
$$

We point out that in some cases $\mu(\mathfrak{n}_{a,b,c}) < a+b+c$.
For instance, it is shown in \cite{AR} that
$$
\mu(\mathfrak{n}_{1,1,c})= \left \lceil 2\sqrt{2c} \right\rceil < 1 + 1 + c.
$$
for all $c\in\N$.
\end{enumerate}

%**********************************************************************************************************************

\section{Estimates for the solution of Problem \ref{prob.opt}}\label{sec.estimates}

%*************************************************************************************************************

In this section we will show some bounds for $r_0^{\text{min}}$ resulting from
considering Problem \ref{prob.opt} with real (instead of integer) variables.
Since $r_0$ is linear, it is clear that, in this case, $r_0^{\text{min}}$ will
be reached in a boundary point of the restriction set.

\subsection{A first simplification}
It is clar that $r_0^{\text{min}}$ is greater than or equal to the minimum of
\begin{equation*}
 r_0 = a_0+\dots+a_{p}
\end{equation*}
subject to

\medskip
\noindent
\begin{tabular}{lll}
(a') &  $a_0,a_{p}>0$ and $a_k\in\mathbb{R}_{\ge 0}$, & for $k=0,\dots,p$; \\[2mm]
(b') & $\displaystyle\sum_{i=0}^{p_0-1} a_i r_{1 + i}\ge {n}_1 $; \\[5mm]
\end{tabular}

\noindent
We will think $r_{p_0}$ as an independent variable in this problem and thus we can reformulate it looking for a minimum of 
\begin{equation*}
 r_0 = a_0+\dots+a_{p_0-1}+r_{p_0}
\end{equation*}
subject to

\medskip
\noindent
\begin{tabular}{lll}
(a') &  $a_0,r_{p_0}>0$ and $a_k\in\mathbb{R}_{\ge 0}$, & for $k=0,\dots,p_0-1$; \\[2mm]
(b') & $\displaystyle\sum_{i=0}^{p_0-1} a_i r_{1 + i}\ge {n}_1 $; & (here $r_i=a_i+\dots + a_{p_0-1}+r_{p_0}$)\\[5mm]
\end{tabular}

\noindent
Let us call this problem as Problem (a'b') for $p_0$.
We notice that, if we consider Problem (a'b') for $p_0$ with the additional restriction $a_k=0$ for some $k=1,\dots,p_0-1$,
then the problem becomes Problem (a'b') for $p_0-1$.
Therefore, in order to find the minimum value of $r_0$ we may consider  $a_k> 0$ for all $k=0,\dots,p_0-1$.
Moreover, since the minimum will be reached at the boundary,  
we can reformulate Problem (a'b') for $p_0$ as: find the minimum of $r_0$
subject to

\medskip
\noindent
\begin{tabular}{lll}
(a') &  $a_k,r_{p_0}>0$,  & for $k=0,\dots,p_0-1$; \\[2mm]
(b') & $\displaystyle\sum_{i=0}^{p_0-1} a_i r_{1 + i}= {n}_1 $.\\[5mm]
\end{tabular}

\medskip
We now will find the minimum value of $r_0$ in this problem. 
We use (b') in order to eliminate the variable $a_{0}$.
Thus we will think $r_0$ as a function of $a_1,\dots,a_{p_0-1},r_{p_0}$,
and we will find the critical values of $r_0$, and next its minimum.

It is not difficult to see that the only critical value of $r_0$ is
\[
(a_1,\dots,a_{p_0-1},r_{p_0}) = (a_0,a_0,\dots,a_0,a_0)
\]
with
$n_1 = \frac{p_0(p_0+1)}2\;a_0^2$.
Also, it is not difficult to see that this is a local minimum and it yields
\begin{equation*}
 r_0=\sqrt{\frac{2(p_0+1)}{p_0}n_1}.
\end{equation*}
This is in fact a global minimum.
Indeed, since $\sqrt{\frac{2(p_0+1)}{p_0}n_1}$ is decreasing as a function of $p_0$, 
taking into account the remark explained above, 
we can not obtain smaller values of $r_0$ by allowing $a_k=0$ for some $k$.

\subsection{A second simplification} 
In this case, we can do an analysis similar to what we did in the first simplification to conclude that 
 $r_0^{\text{min}}$ is  greater than or equal to the minimum of
\begin{equation}\label{eq.a_{0}}
 r_0 = a_0+\dots+a_{p_0-1}+r_{p_0}
\end{equation}
subject to

\medskip
\noindent
\begin{tabular}{lll}
(a') &  $a_k,r_{p_0}>0$,  & for $k=0,\dots,p_0-1$; \\[2mm]

(b') & $\displaystyle\sum_{i=0}^{p_0-1} a_i r_{1 + i}= {n}_1 $; \\[5mm]

(c') & $a_0 r_{p_0}   =  {n}_{p_0}$, \\[3mm]
\end{tabular}

\noindent
In this case we will use (b') and  (c') in order to eliminate the variables $r_{p_0}$ and $a_{p_0-1}$.
Thus we will think $r_0$ as a function of $a_0,\dots,a_{p_0-2}$, we will find its critical values,
and its minimum.

It follows from (c') that
\begin{equation}\label{eq.partial_r_{p_0}}
 \frac{ \partial r_{p_0}}{\partial a_j}   =
 \begin{cases}
- \displaystyle\frac{r_{p_0}}{a_{0}}, & \text{$j=0$;} \\[3mm]
 0, & \text{$1\le j\le p_0\!-\!2$.}
 \end{cases},
 \quad
 \frac{ \partial^2 r_{p_0}}{\partial a_i a_j}   =
 \begin{cases}
 \displaystyle\frac{2r_{p_0}}{a_{0}^2}, & \text{$i=j=0$;} \\[3mm]
 0, & \text{$1\le i,j\le p_0\!-\!2$.}
 \end{cases}
\end{equation}
It follows from (b') that
\begin{equation}\label{eq.a_{p_0-1}}
  n_{1} = (r_0-r_{p_0-1})(a_{p_0-1}+r_{p_0}) + a_{p_0-1} r_{p_0} + \sum_{i=0}^{p_0-3} a_i(r_{i+1}-r_{p_0-1}),
\end{equation}
and we obtain from  \eqref{eq.a_{p_0-1}}
\begin{equation}\label{eq.partial_a_{p_0-1}}
 \frac{ \partial a_{p_0-1}}{\partial a_j}   =
  \begin{cases}
  -  \displaystyle\frac{(r_{0}-a_0-r_{p_0})(a_{0}-r_{p_0})}{(r_0-a_{p_0-1})a_{0}}  , & \text{$j=0$;}     \\[5mm]
  - \displaystyle\frac{r_{0}-a_j}{r_0-a_{p_0-1}}, &  \text{$1\le j\le p_0-2$.}
  \end{cases}
\end{equation}
and
\begin{equation}\label{eq.partial2_a_{p_0-1}}
 \frac{ \partial^2 a_{p_0-1}}{\partial a_i a_j}   =
  \begin{cases}
    \displaystyle\frac{2(r_{0}\!-\!a_0\!-\!r_{p_0})(a_0^2\!-\!r_{p_0}(2a_{0}\!+\!r_0\!-\!r_{p_0\!-\!1}))}{(r_0\!-\!a_{p_0\!-\!1})^2 a_{0}^2},
                                                              & \text{$i=j=0$;}     \\[5mm]
    \displaystyle\frac{(r_{0}\!+\!a_{p_0\!-\!1}\!-\!a_0\!-\!a_i\!-\!r_{p_0})(a_{0}\!-\!r_{p_0})}{(r_0\!-\!a_{p_0\!-\!1})^2a_{0}}  ,
                                                              & \text{$0=j<i\le p_0\!-\!2$;}     \\[5mm]
    \displaystyle\frac{(r_{0}\!+\!a_{p_0\!-\!1}\!-\!a_j\!-\!a_i)}{(r_0\!-\!a_{p_0\!-\!1})^2}  ,
                                                              & \text{$0<j<i\le p_0\!-\!2$;}     \\[5mm]
  \displaystyle\frac{2(r_{0}\!-\!a_j)}{(r_0\!-\!a_{p_0\!-\!1})^2}  ,  &  \text{$0<j=i\le p_0\!-\!2$;}
  \end{cases}
\end{equation}
Therefore, it follows from \eqref{eq.a_{0}}, \eqref{eq.partial_r_{p_0}} and \eqref{eq.partial_a_{p_0-1}} that
 \[
  \frac{ \partial r_{0}}{\partial a_j}   =
  \begin{cases}
    \displaystyle\frac{(r_{p_0} +a_0-a_{p_0-1})(a_{0}-r_{p_0})}{(r_0-a_{p_0-1})a_{0}}  , & \text{$j=0$;}     \\[5mm]
    \displaystyle\frac{a_j-a_{p_0-1}}{r_0-a_{p_0-1}}, &  \text{$1\le j\le p_0-2$.}
  \end{cases}
 \]

 The (possible) critical values of $r_0$ are two.
 First
\[
(a_0,a_1,a_2,\dots,a_{p_0-1},r_{p_0}) =
(a_0,a_1,a_1,\dots,a_1,a_0)
\]
with
\begin{align*}
n_1 &= \frac{(p_0-1)(p_0-2)}2\;a_1^2 + 2(p_0-1)\;a_0a_1 +a_0^2, \\
n_{p_0}&= a_0^2;
\end{align*}
whose positive solutions are $a_0 =\sqrt{n_{p_0}}$ and
\begin{align*}
a_1 &= \frac{\sqrt{2(p_0-1)\big((p_0-2)n_1+p_0n_{p_0}\big)}-2(p_0-1)\sqrt{n_{p_0}}}{(p_0-1)(p_0-2)}.
\end{align*}
This yields
\begin{equation}\label{eq.r_0_maxmin}
 r_0=\sqrt{\frac{2(p_0-1)}{p_0-2}n_1+\frac{2p_0(p_0-1)}{(p_0-2)^2}n_{p_0}}-\frac{2}{p_0-2}\sqrt{n_{p_0}}.
\end{equation}
This critical value always exists.
The second case is
\[
(a_0,a_1,a_2,\dots,a_{p_0-1},r_{p_0})  =
(a_0,a_1,a_1,\dots,a_1,a_1-a_0)
\]
with
\begin{align*}
n_1 &= \frac{p_0(p_0-1)}2\;a_1^2 + a_0a_1 -a_0^2, \\
n_{p_0}&=n_1 - \frac{p_0(p_0-1)}2\; a_1^2;
\end{align*}
whose positive solutions are $a_1 = \sqrt{\frac{2(n_1-n_{p_0})}{p_0(p_0-1)}}$ and
\begin{align*}
a_0 &=\sqrt{\frac{n_1-n_{p_0}}{2p_0(p_0-1)}}\pm
\sqrt{\frac{n_1-n_{p_0}}{2p_0(p_0-1)}-n_{p_0}}.
\end{align*}
These $\pm$ critical values exist if and only if
\begin{equation}\label{eq.condition}
 n_1\ge \big((p_0-1)^2+p_0^2\big)n_{p_0}
\end{equation}
and either of them yields
\begin{equation}\label{eq.r_0_min}
 r_0=\sqrt{\frac{2p_0}{p_0-1}(n_1-n_{p_0})}.
\end{equation}

If condition  \eqref{eq.condition} holds, then
the value of \eqref{eq.r_0_min} is a local minimum
(and the value of \eqref{eq.r_0_maxmin} is a local maximum).
If condition  \eqref{eq.condition} does not hold, then
the value of \eqref{eq.r_0_maxmin} is a local minimum.

Arguing as we did with in first simplification we conclude that  
the value of \eqref{eq.r_0_min}, if \eqref{eq.condition} holds,
and the value of \eqref{eq.r_0_maxmin}, 
if \eqref{eq.condition} does not hold,
is a global minimum.

%=============================================================================================

%====================================================================================

\begin{thebibliography}{BML}


%==============================================================================================

\bibitem[ARo]{AR} A. Alvarez, N. Rojas,
 \emph{Faithful representation of $2$-step nilradicals of parabolic subalgebras of $A_n$},
 in progress.

\bibitem[Be]{Be} Y. Benoist,
 \emph{Une Nilvariete Non Affine},
  J. Diff. Geom., Vol. \textbf{41},(1995), 21--52.

\bibitem[Bi]{Bi} G. Birkhoff,
 \emph{Representability of Lie algebras and Lie groups by matrices},
  Ann. of Math. (2), Vol. \textbf{38}, (1937), 526--532.

 \bibitem[BoG]{BoG} J. Bourgain, A. Gamburd,
\emph{ Uniform expansion bounds for Cayley graphs of $\text{SL}_2(F_p)$. }
 Ann. of Math. (2),
\textbf{167}(2008), 625--642.

\bibitem[B]{Bu} D. Burde,
\emph{Left-symmetric algebras, or pre-Lie algebras in geometry and physics},
Central European J. of Math. \textbf{4}(3), 323--357 (2006).

\bibitem[BEdG]{BEdG} D. Burde, B. Eick, A. de Graaf,
 \emph{Computing faithful representations for nilpotent Lie algebras},
 J. of Alg.,Vol. \textbf{322}, No. 3, (2009), 602--612.

\bibitem[BM1]{BM} D. Burde, W. Moens,
 \emph{Minimal Faithful Representations of Reductive Lie Algebras},
 Archiv der Mathematik.,Vol. \textbf{89}, No. 6, (2007), 513--523.

\bibitem[BM2]{BM2} D. Burde, W. Moens,
 \emph{Faithful Lie algebra modules and quotients of the universal enveloping algebra},
 J. of Alg.,Vol. \textbf{325}, No. 1, (2011), 440--460.

\bibitem[CRo]{CR} L. Cagliero, N. Rojas,
\emph{Faithful representation of minimal dimension of current Heisenberg Lie algebras},
Int. J. Math. Vol. \textbf{20} (11), (2009), 1347--1362.

\bibitem[dGN]{dGN} W. de Graaf, W. Nickel,
\emph{Constructing faithful representations of finitely-generated
      torsion-free nilpotent groups},
 J. Symbolic Comput.,Vol. \textbf{33}, No. 1, (2002), 31--41.

 \bibitem[dG]{dG} W. de Graaf,
 \emph{Constructing faithful matrix representations of Lie algebras},
Proceedings of the 1997 International Symposium on Symbolic and Algebraic Computation,
ISSAC’97, ACM Press, New York, 1997,  54--59.

\bibitem[GSe]{GSe}  F. Grunewald, D. Segal,
\emph{Some General Algorithms. I: Arithmetic Groups},
Ann. of Math., Second Series, Vol. \textbf{112}(1980),  531--583.

\bibitem[J]{J} N. Jacobson,
{Lie Algebras},
Interscience Publishers, New York (1962).

\bibitem[K]{K} H. Kim,
 \emph{Complete left-invariant affine structures on nilpotent Lie groups},
 J. Dif. Geometry \textbf{24}(1986), 373--394.

 \bibitem[Mi]{Mi} J. Milnor,
\emph{On fundamental groups of complete affinely flat manifolds},
Adv. Math., \textbf{25} (1977), 178--187.

 \bibitem[Ne]{Ne}  Y. Neretin,
 \emph{A construction of finite-dimensional faithful representation of Lie algebra},
 Rend. Circ. Mat. Palermo (2) Suppl. \textbf{71}(2003), 159--161.

 \bibitem[N]{N}  W. Nickel,
 \emph{Matrix representations for torsion-free nilpotent groups by Deep Thought},
 J. of Algebra, \textbf{300}(2006), 376--383.

\bibitem[Ro]{R} N. Rojas.
 \emph{Minimal Faithful Representation of the Heisenberg Lie algebra with abelian factor},
Journal of Lie Theory \textbf{23} (2013), No. 4, 1105--1114.

\bibitem[SX]{SX} P. Sarnak, X. Xue,
\emph{Bounds for multiplicities of automorphic representations},
Duke Math. J., \textbf{64}(1991), 207--227.

\bibitem[S]{S} I. Schur.
 \emph{Zur Theorie vertauschbarer Matrizen},
 J. Reine Angew. Mathematik , \textbf{130} (1905), 66--76.

\bibitem[Se1]{Se1} D. Segal,
 Polycyclic Groups, Cambridge University Press.

\bibitem[Se2]{Se} D. Segal,
 \emph{Free left-symmetric algebras and an analogue of the Poincar\'e-Birkhoff-Witt Theorem},
    J. Algebra \textbf{164} (1994), 750--772.

%\bibitem[Ro1]{Ro1} N. Rojas.
% \emph{Minimal Faithful Representation of the Heisenberg Lie algebra with abelian factor},
% 	arXiv:1206.5867v2 [math.RT].

%\bibitem[Ro2]{Ro2} N. Rojas.
% \emph{Faithful Representation of Minimal dimension of 6-dimensional Nilpotent Lie algebras},
% 	 arXiv:1211.2503v1 [math.RT].
\end{thebibliography}
\end{document}